  \edef\mtht{\the\textheight}
  \edef\mtwd{\the\textwidth}
\tikzset{
  commutative diagrams/.cd, 
  arrow style=tikz, 
  diagrams={>=stealth}
}
\newcommand{\printreferences}{\printbibliography[heading=bibintoc]}
\ifundef{\abstract}{}{\patchcmd{\abstract}%
    {\quotation}{\quotation\noindent\ignorespaces}{}{}}
\numberwithin{equation}{section}
\renewcommand{\qedsymbol}{$\blacksquare$}
\newcommand{\CorollaryQED}{\qedsymbol}
\newcommand{\ConjectureQED}{$\square$}
\newcommand{\SituationQED}{$\times$}
\newcommand{\DefinitionQED}{$\bullet$}
\newcommand{\NotationQED}{$\circ$}
\newcommand{\ExampleQED}{$\spadesuit$}
\newcommand{\RemarkQED}{$\clubsuit$}
\declaretheorem[numberlike=equation,]{theorem}
\declaretheorem[numbered=no,name=Theorem]{theorem*}
\declaretheorem[numberlike=equation,name=Lemma]{lemma}
\declaretheorem[numberlike=equation,name=Proposition]{prop}
\declaretheorem[numberlike=equation,name=Corollary,qed=\CorollaryQED]{cor}
\declaretheorem[numberlike=equation,name=Definition,style=definition,qed=\DefinitionQED]{definition}
\declaretheorem[numbered=no,name=Definition,style=definition,qed=\DefinitionQED]{definition*}
\declaretheorem[numberlike=equation,name=Notation,style=definition,qed=\NotationQED]{notation}
\declaretheorem[numberlike=equation,style=definition,qed=\ExampleQED]{example}
\declaretheorem[numberlike=equation,style=remark,qed=\RemarkQED]{remark}
\declaretheorem[numberlike=equation,style=remark]{convention}
\def\makeautorefname#1#2{\AtBeginDocument{\expandafter\def\csname#1autorefname\endcsname{#2}}}
\numberwithin{substep}{step}
\setlist[description]{leftmargin=!,labelindent=1em}
\setlist[enumerate]{label={\rm (\arabic*)},ref=\arabic*}
\setlist[enumerate,2]{label={\rm (\alph*)},ref=\theenumi.\alph*}
\let\C\undefined
\DeclareFontFamily{U}{mathx}{\hyphenchar\font45}
\DeclareFontShape{U}{mathx}{m}{n}{
      <5> <6> <7> <8> <9> <10>
      <10.95> <12> <14.4> <17.28> <20.74> <24.88>
      mathx10
      }{}
\DeclareSymbolFont{mathx}{U}{mathx}{m}{n}
\DeclareMathAccent{\widecheck}{0}{mathx}{"71}
\DeclareMathAccent{\wideparen}{0}{mathx}{"75}
\DeclareMathOperator{\Diff}{Diff}
\DeclareMathOperator{\HF}{\HF}
\DeclareMathOperator{\Hom}{Hom}
\DeclareMathOperator{\area}{area}
\DeclareMathOperator{\ind}{index}
\DeclareMathOperator{\supp}{supp}
\DeclarePairedDelimiter\paren{\lparen}{\rparen}
\DeclarePairedDelimiter{\Abs}{\|}{\|}
\DeclarePairedDelimiter{\Inner}{\langle}{\rangle}
\DeclarePairedDelimiter{\abs}{\lvert}{\rvert}
\DeclarePairedDelimiter{\bracket}{\langle}{\rangle}
\DeclarePairedDelimiter{\set}{\lbrace}{\rbrace}
\def\({\left(}
\def\){\right)}
\def\<{\left\langle}
\def\>{\right\rangle}
\newcommand{\C}{{\mathbf{C}}}
\newcommand{\N}{{\mathbf{N}}}
\newcommand{\R}{\mathbf{R}}
\newcommand{\co}{\mskip0.5mu\colon\thinspace}
\newcommand{\defined}[2][\key]{\def\key{#2}\textbf{#2}\index{#1}}
\newcommand{\delbar}{\bar{\del}}
\newcommand{\del}{\partial}
\newcommand{\id}{\mathrm{id}}
\newcommand{\inner}[2]{\bracket{#1, #2}}
\newcommand{\iso}{\cong}
\newcommand{\loc}{\mathrm{loc}}
\newcommand{\qandq}{\quad\text{and}\quad}
\newcommand{\qforq}{\quad\text{for}\quad}
\newcommand{\qforeveryq}{\quad\text{for every}\quad}
\newcommand{\reg}{\mathrm{reg}}
\newcommand{\sing}{\mathrm{sing}}
\newcommand{\vol}{\mathrm{vol}}
\renewcommand{\emptyset}{\varnothing}
\renewcommand{\epsilon}{\varepsilon}
\renewcommand{\setminus}{{\backslash}}
\renewcommand{\leq}{\leqslant}
\renewcommand{\geq}{\geqslant}
\renewcommand*\env@matrix[1][*\c@MaxMatrixCols c]{%
  \hskip -\arraycolsep
  \let\@ifnextchar\new@ifnextchar
  \array{#1}}
\renewcommand\xleftrightarrow[2][]{%
  \ext@arrow 9999{\longleftrightarrowfill@}{#1}{#2}}
\newcommand\longleftrightarrowfill@{%
  \arrowfill@\leftarrow\relbar\rightarrow}
\newcommand{\rd}{{\rm d}}
\newcommand{\bM}{{\mathbf{M}}}
\newcommand{\sH}{\mathscr{H}}
\newcommand{\sJ}{\mathscr{J}}
\newcommand{\sR}{\mathscr{R}}
\newcommand{\fd}{{\mathfrak d}}
\newcommand{\fn}{{\mathfrak n}}
\author{
  Aleksander Doan
  \and
  Thomas Walpuski
}
\title{  
  Castelnuovo's bound and rigidity in almost complex geometry
}
\date{2020-09-21}
\begin{document}
\maketitle

\begin{abstract}
  This article is concerned with the question of whether an energy bound implies a genus bound for pseudo-holomorphic curves in almost complex manifolds.
  After reviewing what is known in dimensions other than six,
  we establish a new result in this direction in dimension six;
  in particular, for symplectic Calabi--Yau $3$--folds.
  The proof relies on compactness and regularity theorems for pseudo-holomorphic currents. 
\end{abstract}


\section{Introduction}
\label{Sec_Introduction}

In 1889, \citet{Castelnuovo1889} found a sharp upper bound for the genus of an irreducible, nondegenerate curve of a given degree in $\C P^n$;
see \cite[Chapter III Section 2]{Arbarello1985} for a proof in modern language.
A corollary of this result is that for every projective variety there is an upper bound for the genus of an irreducible curve representing a given homology class. 
Our starting point is the question:
\begin{center}
  Are there analogues of Castelnuovo's bound in almost complex geometry?
\end{center}

For curves in $\C P^2$ Castelnuovo's bound reduces to the degree-genus formula.
The latter is a consequence of the adjunction formula,
which generalizes to an inequality for almost complex $4$--manifolds \cite[Theorem 2.6.4]{McDuff2012}.
The adjunction inequality directly implies the following well-known genus bound.
\begin{prop}
  \label{Prop_FourDimensionalCastelnuovo}
  Suppose that $(M,J)$ is an almost complex $4$--manifold.
  If there exists a simple $J$--holomorphic map $u\co \Sigma \to M$ representing $A \in H_2(M)$,
  then the genus $g(\Sigma)$ satisfies
  \begin{equation}
    \label{Eq_FourDimensionalCastelnuovo}
    g(\Sigma) \leq \frac{1}{2}\(A\cdot A - \inner{c_1(M,J)}{A}\) + 1.
  \end{equation}
\end{prop}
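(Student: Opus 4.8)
The plan is to deduce the bound directly from the adjunction inequality of \cite[Theorem 2.6.4]{McDuff2012}, which is the $J$--holomorphic incarnation of the classical adjunction formula in dimension four. Since $u$ is simple it is somewhere injective, so that result applies and furnishes a nonnegative integer $\delta(u) \geq 0$---the virtual count of double points and critical points of $u$, vanishing precisely when $u$ is an embedding---entering the identity
\begin{equation*}
  A\cdot A - \inner{c_1(M,J)}{A} = 2\delta(u) - \chi(\Sigma).
\end{equation*}
First I would record this identity in the sign conventions of the cited theorem, taking care that $A\cdot A$ denotes the homological self-intersection number of $A = u_*[\Sigma]$ and that $\inner{c_1(M,J)}{A}$ is the pairing of the first Chern class of $(TM,J)$ with $A$. (That the quantities $g(\Sigma)$ and $A \in H_2(M)$ make sense already encodes that $\Sigma$ is a closed oriented surface.)

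Next I would substitute the Euler characteristic $\chi(\Sigma) = 2 - 2g(\Sigma)$ into the identity and solve for the genus, which gives
\begin{equation*}
  g(\Sigma) = \frac{1}{2}\(A\cdot A - \inner{c_1(M,J)}{A}\) + 1 - \delta(u).
\end{equation*}
The desired inequality \eqref{Eq_FourDimensionalCastelnuovo} then follows immediately from $\delta(u) \geq 0$, and one sees as a byproduct that equality holds exactly when $u$ is an embedding.

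The entire analytic content---positivity of intersections for $J$--holomorphic curves in dimension four and the local model for their singularities, which is what forces $\delta(u)$ to be a well-defined nonnegative integer---is packaged inside the cited adjunction inequality, so I do not expect any genuine obstacle beyond bookkeeping. The only points demanding care are matching the sign and orientation conventions of \cite{McDuff2012} to the normalization used above, and confirming that the hypothesis \emph{simple} is exactly what supplies the somewhere-injectivity required to invoke \cite[Theorem 2.6.4]{McDuff2012}.
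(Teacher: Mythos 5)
Your proposal is correct and takes essentially the same route as the paper: the paper proves \eqref{Eq_FourDimensionalCastelnuovo} by simply invoking the adjunction inequality of \cite[Theorem 2.6.4]{McDuff2012}, which is exactly the result you cite. Your rearrangement of the adjunction identity using $\chi(\Sigma) = 2 - 2g(\Sigma)$ and $\delta(u) \geq 0$ just spells out the bookkeeping that the paper leaves implicit in the phrase ``the adjunction inequality directly implies the following well-known genus bound.''
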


The following is a consequence of Gromov's h-principle for symplectic embeddings \cite[Section 3.4.2 Theorem (A)]{Gromov1986}.
It shows that in higher dimensions there cannot be a genus bound which holds for all almost complex structures.

\begin{prop}
  \label{Prop_NoUniversalGenusBound}
  Let $(M,\omega)$ be a symplectic manifold of dimension $2n \geq 6$.
  For every $A \in H_2(M)$ with $\inner{[\omega]}{A} > 0$ and every $g \in \N$ there is an almost complex structure $J$ compatible with $\omega$ and a $J$--holomorphic embedding $u \co \Sigma \to M$ satisfying
  \begin{equation*}
    g(\Sigma) \geq g.
  \end{equation*}
\end{prop}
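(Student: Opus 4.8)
The plan is to reduce the statement to the existence of embedded \emph{symplectic} surfaces of arbitrarily large genus in the class $A$, and then to produce the almost complex structure by hand. Concretely, it suffices to show that for every $N \in \N$ there is a connected embedded surface $C \into M$ with $[C] = A$, $g(C) \geq N$, and $\omega|_{TC} > 0$; taking $N \geq n$ then gives $g(C) \geq n$. Indeed, for such a $C$ the restriction $\omega|_{TC}$ is an area form, so $TC$ and its $\omega$--orthogonal complement $(TC)^{\omega}$ are symplectic subbundles with $TM|_C = TC \oplus (TC)^{\omega}$. Choosing $\omega$--compatible complex structures on each summand and taking their direct sum gives an $\omega$--compatible complex structure on $TM|_C$ that preserves $TC$. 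As the space of $\omega$--compatible complex structures on a symplectic vector bundle is contractible, this section over the closed set $C$ extends to a global $\omega$--compatible almost complex structure $J$ on $M$; then $TC$ is $J$--invariant, so $C$ is an embedded $J$--holomorphic curve representing $A$.

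To build the surface, first forget the symplectic condition. Since $\dim M = 2n \geq 6 > 4$, general position lets us represent $A$ by a connected embedded oriented surface $\Sigma_0$ (tubing components together if necessary); and by repeatedly performing ambient surgery along small embedded arcs, i.e.\ attaching null-homologous handles, we may increase the genus at will, obtaining for each $N$ a connected embedded $\Sigma \into M$ with $[\Sigma] = A$ and $g(\Sigma) \geq N$.

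The heart of the matter is to isotope $\Sigma$ to a symplectic surface of the same genus and homology class. This is Gromov's h--principle for symplectic surfaces, in the form used by \citet[Corollary 2.13]{Li2005}. The formal datum needed is a fibrewise $\omega$--positive monomorphism $T\Sigma \to TM|_\Sigma$ covering the inclusion; since the space of $\omega$--positive $2$--planes in a symplectic vector space is homotopy equivalent to $\CP^{n-1}$, and in particular is connected and simply connected, such a formal datum exists over the surface $\Sigma$ without obstruction. The genuine, integrable conclusion is what requires work, and here the hypotheses $\inner{[\omega]}{A} > 0$ and $2n \geq 6$ enter: the codimension $2n - 2 \geq 4$ lets the resulting surface be kept embedded rather than merely immersed, while positivity of the symplectic area is forced by $\int_C \omega = \inner{[\omega]}{A}$.

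The main obstacle is exactly this passage from the formal datum to a genuine symplectic surface. The relation ``the oriented tangent plane is $\omega$--positive'' is open but \emph{not} ample: an embedded symplectic surface in class $A$ necessarily has positive area $\inner{[\omega]}{A} > 0$, so plain convex integration, which would ignore this homological constraint, cannot produce it. One must instead invoke Gromov's dedicated h--principle for symplectic immersions, for which positivity of the symplectic area is precisely the condition deforming the formal solution to an honest one; I would cite \citet[Corollary 2.13]{Li2005} for this. Finally, the genus can be prescribed either within the h--principle itself or, more concretely, by taking the symplectic connected sum of a single symplectic representative of $A$ with null-homologous genus-one symplectic tori supported in disjoint Darboux balls --- the connecting symplectic tubes fit because $2n \geq 6$ --- thereby raising the genus in steps of one while preserving $[C] = A$. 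Combined with the construction of $J$ above, this proves the proposition.
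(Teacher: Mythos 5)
Your main line is the same as the paper's proof: represent $A$ by a connected embedded oriented surface, raise the genus topologically by attaching handles, invoke Gromov's h-principle for symplectic embeddings (the paper cites \citet[Section 3.4.2 Theorem (A)]{Gromov1986}; you route it through \citet[Corollary 2.13]{Li2005}) to isotope this surface to an embedded symplectic surface of the same genus in the same class, and then build $J$ from the splitting $TM|_C = TC \oplus (TC)^{\omega}$ plus contractibility of the space of $\omega$--compatible complex structures. This is correct and complete, including your (accurate) remarks that the relation ``$\omega$--positive tangent plane'' is open but not ample, that $\inner{[\omega]}{A} > 0$ is exactly the cohomological hypothesis the h-principle needs, and that the codimension bound $2n-2 \geq 4$ is what keeps the output embedded.

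One side remark, however, is genuinely wrong: there are no ``null-homologous genus-one symplectic tori supported in Darboux balls.'' On a Darboux ball $\omega$ is exact, so by Stokes' theorem every closed surface contained in it has zero symplectic area, while a closed symplectic surface always has positive area; equivalently, a closed symplectic surface can never be null-homologous. So the proposed ``more concrete'' genus-raising device --- symplectic connected sum with such tori --- cannot be carried out, and no local (ball-supported) construction can raise the genus of a symplectic representative. Fortunately this was offered only as an alternative: your primary route, which fixes the genus at the purely topological stage and lets the h-principle preserve the diffeomorphism type of the surface, is exactly what the paper does and suffices on its own. Simply delete the alternative.
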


There are, however, genus bounds for \emph{generic} almost complex structures.
Here is a simple example, which follows easily from the index formula and transversality theorem for simple $J$--holomorphic maps  \cite[Chapter 3]{McDuff2012}.

\begin{prop}
  \label{Prop_HighDimensionalCastelnuovo}
  Let $M$ be a manifold of dimension $2n$.
  Denote by $\sJ$ the space of smooth almost complex structures on $M$ equipped with the $C_\loc^\infty$--topology.
  There is a comeager%
  \footnote{%
    Let $X$ be a topological space.
    A subset $A \subset X$ is called \defined{comeager} (or \defined{residual}) if it contains the intersection of countably many dense open subsets.
    A comeager subset of a complete metric space is dense.
  }%
  subset $\sJ_\clubsuit \subset \sJ$ such that for every $J \in \sJ_\clubsuit$ the following holds:
  if there exists a simple $J$--holomorphic map $u\co \Sigma \to M$ representing $A \in H_2(M)$,
  then
  \begin{equation}
    \label{Eq_HighDimensionalCastelnuovo}
    \begin{dcases}
      \inner{c_1(M,J)}{A} \geq 0 & \textnormal{if } n = 3 \\
      g(\Sigma) \leq \frac{\inner{c_1(M,J)}{A}}{n-3} + 1 & \textnormal{if } n > 3.
    \end{dcases}
  \end{equation}
  Moreover, if $M$ carries a symplectic form $\omega$,
  then the same holds with $\sJ$ replaced by the space $\sJ(\omega)$ of smooth almost complex structures compatible with $\omega$.
\end{prop}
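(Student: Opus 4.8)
The plan is to read the two inequalities in \eqref{Eq_HighDimensionalCastelnuovo} off the expected dimension of a moduli space of simple curves, using that a moduli space of negative expected dimension is empty for generic $J$. For a closed genus--$g$ surface $\Sigma$ and a simple $J$--holomorphic map $u \co \Sigma \to M$ representing $A$, the linearised Cauchy--Riemann operator on $u^*TM$ is Fredholm of real index $2\inner{c_1(M,J)}{A} + 2n(1-g)$ by Riemann--Roch. Letting the complex structure on $\Sigma$ vary and dividing by reparametrisations contributes $\dim\mathcal{T}_g - \dim\Aut(\Sigma) = 6g-6$ (uniformly in $g$), so the moduli space $\cM_g(A;J)$ of unparametrised simple curves of genus $g$ in class $A$ has expected dimension
\[
  e(g,A) = 2\inner{c_1(M,J)}{A} + 2(n-3)(1-g).
\]
Since $c_1(M,J)$ is locally constant in $J$, so is the integer $e(g,A)$, and a one-line rearrangement identifies \eqref{Eq_HighDimensionalCastelnuovo} with the single inequality $e(g,A)\geq 0$ (the case $n=3$ giving the first line, $n>3$ the second). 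It therefore suffices to produce a residual $\sJ_\clubsuit$ so that for $J\in\sJ_\clubsuit$ no simple genus--$g$ curve in class $A$ exists whenever $e(g,A)<0$.

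First I would form the universal moduli space $\cM_g(A)^{\mathrm{univ}}$ of triples $(j,u,J)$, with $j$ a complex structure on the fixed genus--$g$ surface, $u$ a simple $J$--holomorphic map in class $A$, and $J\in\sJ$, taken modulo reparametrisation. Somewhere injectivity of simple maps makes the universal linearisation surjective, so $\cM_g(A)^{\mathrm{univ}}$ is a Banach manifold and the projection $\pi\co\cM_g(A)^{\mathrm{univ}}\to\sJ$ is Fredholm of (locally constant) index $e(g,A)$. Over the open locus where $e(g,A)<0$ the index is negative, and Sard--Smale produces a residual set of regular values $J$; each such $J$ has empty fibre $\pi^{-1}(J)=\cM_g(A;J)$, being a manifold of negative dimension. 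On the complementary locus the bound $e(g,A)\geq 0$ holds by definition, so in either case one obtains a residual $\sJ_{\reg}(g,A)$ over which \eqref{Eq_HighDimensionalCastelnuovo} holds. Intersecting over the countably many pairs $(g,A)$ yields $\sJ_\clubsuit=\bigcap \sJ_{\reg}(g,A)$.

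This is also where the hypothesis $J\in C^2$ enters, and I would make the bookkeeping explicit. Sard--Smale for a $C^k$ Fredholm map of index $\ell$ requires $k\geq\max\set{\ell+1,1}$; since we only invoke it where $e(g,A)<0$, the bound $\max\set{e+1,1}=1$ shows that a $C^1$ projection suffices. With $J$ of class $C^2$ the universal Cauchy--Riemann section depends $C^1$ on $(j,u,J)$, so $\pi$ is indeed a $C^1$ Fredholm map. Thus $C^2$ is exactly the regularity that lets one run the argument simultaneously for all the relevant moduli spaces, with no need to let the differentiability of $J$ grow with the index. For the symplectic statement I would repeat the construction with $\sJ$ replaced by $\sJ(\omega)$, the only new input being that the universal linearisation remains surjective when the variation of $J$ is constrained to the tangent space of $\sJ(\omega)$ — the standard fact that $\omega$--compatible structures suffice for transversality of simple curves.

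The hard part is the transversality underlying the construction: proving surjectivity of the linearised operator at simple (somewhere injective, possibly non-immersed) curves, in both the unconstrained and the $\omega$--compatible settings, and confirming that $C^2$ differentiability genuinely gives the Banach--manifold structure and the $C^1$ projection needed for the Sard--Smale step. Once transversality is in hand, the passage from $e(g,A)\geq 0$ to \eqref{Eq_HighDimensionalCastelnuovo} is immediate.
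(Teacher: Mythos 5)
Your proposal is correct and follows essentially the same route as the paper: both identify the inequality \eqref{Eq_HighDimensionalCastelnuovo} with non-negativity of the Fredholm index $(n-3)(2-2g) + 2\inner{c_1(M,J)}{A}$ of the projection from the universal moduli space of simple maps to $\sJ$ (respectively $\sJ(\omega)$), and then apply the Sard--Smale theorem to conclude that for generic $J$ the moduli spaces of negative index are empty. The only difference is one of exposition: what you flag as ``the hard part'' (transversality at simple, possibly non-immersed curves, and the $C^2$/$C^1$ regularity bookkeeping) the paper disposes of by citing \citet{Wendl2010} and \citet{Ionel2018}.
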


\autoref{Prop_NoUniversalGenusBound} and \autoref{Prop_HighDimensionalCastelnuovo} are both well-known and we omit their proofs.

The preceding discussion leaves open the case of generic almost complex structures in dimension six and homology classes satisfying $\inner{c_1(M,J)}{A} \geq 0$.
In the present article,
we focus on the case
\begin{equation*}
  \inner{c_1(M,J)}{A} = 0,
\end{equation*}
that is: on classes for which the corresponding moduli space of $J$--holomorphic maps has expected dimension zero.
This includes all homology classes in symplectic Calabi--Yau $3$--folds, that is: symplectic manifolds $(M,\omega)$ such that $\dim M = 6$ and $c_1(M,J) = 0$ for some almost complex structure $J$ compatible with $\omega$. 
Our motivation for considering this case comes from our project to construct a symplectic analogue of the Pandharipande--Thomas invariants of projective Calabi--Yau $3$--folds \cite[Section 7]{Doan2017d}.
Another motivation comes from the Gopakumar--Vafa conjecture.
\citet{Bryan2001} defined the Gopakumar--Vafa BPS invariants $n_A^g(M,\omega)$ of a symplectic Calabi--Yau $3$--fold $(M,\omega)$ in terms of its Gromov--Witten partition function.
They conjectured that the BPS invariants $n_A^g(M,\omega)$ are integers and vanish for all but finitely many $g$ \cite[Conjecture 1.2]{Bryan2001}.
The integrality conjecture has been proved by \citet{Ionel2018}.
The finiteness conjecture remains open and is closely related to the question about the existence of genus bounds for symplectic Calabi--Yau $3$--folds.

Motivated by Gromov--Witten theory,
\citet{Bryan2001} introduced the notion of \emph{$k$--rigidity} for almost complex structures;
see \autoref{Def_KRigidAlmostComplexStructure}.
They conjectured that a generic almost complex structure is \emph{$\infty$--rigid} (or \emph{super-rigid}), that is: $k$--rigid for every $k \in \N$.
This has recently been proved by \citet{Wendl2016};
see \autoref{Thm_SuperRigidity}.
A concise exposition of \citeauthor{Wendl2016}’s proof using the framework of equivariant Brill--Noether theory for elliptic operators can be found in \cite{Doan2018}.

The main result of this article shows that $k$--rigidity implies a Castelnuovo bound.

\begin{theorem}
  \label{Thm_KRigidityImpliesFiniteness}
  Let $k \in \N \cup \set{\infty}$.  
  Let $(M,J,g)$ be a compact almost Hermitian $6$--manifold with a $k$--rigid almost complex structure $J$.
  Suppose $A \in H_2(M)$ satisfies $\inner{c_1(M,J)}{A} = 0$ and has divisibility at most $k$.
  Given any $\Lambda > 0$, there are only finitely many simple $J$--holomorphic maps representing $A$ and with energy at most $\Lambda$.
\end{theorem}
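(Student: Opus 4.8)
The plan is to argue by contradiction. Suppose there were infinitely many pairwise distinct simple $J$--holomorphic maps $u_i \co \Sigma_i \to M$ representing $A$ with $E(u_i) \leq \Lambda$. Since $(M,J,g)$ is almost Hermitian, the energy of a $J$--holomorphic map equals the area of its image, so the push-forwards $T_i := (u_i)_*[\Sigma_i]$ are closed $J$--holomorphic integral cycles of mass at most $\Lambda$, all representing the fixed class $A \in H_2(M)$. Crucially, the genus $g(\Sigma_i)$ is \emph{a priori} unbounded, so Gromov compactness for maps of fixed genus is unavailable; instead I would invoke the compactness theorem for $J$--holomorphic cycles. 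After passing to a subsequence, the $T_i$ converge in the sense of currents to a $J$--holomorphic cycle $T$, and since homology classes and mass bounds pass to flat limits of integral cycles, $T$ again represents $A$ and has mass at most $\Lambda$.

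Next I would apply the regularity/structure theorem for two--dimensional $J$--holomorphic currents to write $T = \sum_a m_a C_a$, where the $m_a \in \N$ are positive multiplicities and each $C_a$ is the image of a somewhere--injective $J$--holomorphic map $v_a$, embedded away from finitely many points. A homological computation gives $\sum_a m_a [C_a] = A$. The hypotheses $\inner{c_1(M,J)}{A} = 0$ and that the divisibility of $A$ is at most $k$ are precisely what bring the limit within reach of \autoref{Def_KRigidAlmostComplexStructure}: they guarantee that each component $C_a$ is among the curves governed by $k$--rigidity and that the degrees of the covers entering the local analysis below do not exceed $k$. In particular each $C_a$ is embedded and super--rigid up to degree $k$, meaning that the normal $\delbar$--type deformation operator of $v_a$, pulled back to any connected branched cover of degree at most $k$, is injective.

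The heart of the argument is the local analysis near the limit cycle. For $i$ large, $\supp T_i$ lies in an arbitrarily small tubular neighborhood of $\supp T = \bigcup_a C_a$, and away from the finitely many singular and intersection points it is described as the graph of a small normal section over a cover of the domain of $v_a$ of degree $m_a$ — that is, as a possibly branched, possibly multivalued solution of a perturbed equation $\delbar s_i + Q(s_i) = 0$ in the normal bundle, with $s_i \to 0$ as $i \to \infty$. Injectivity of the pulled--back normal operator on covers of degree at most $k$ supplies a uniform lower elliptic estimate incompatible with a nonzero solution tending to zero; hence $s_i \equiv 0$ for $i$ large, so the $T_i$ are eventually all equal to $T$. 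Since a simple map is determined by its image up to a finite reparametrization ambiguity, infinitely many pairwise distinct $u_i$ yield infinitely many distinct cycles $T_i$, contradicting $T_i = T$ for large $i$.

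I expect the main obstacle to be exactly this last step: converting the current--level convergence $T_i \to T$ into the normal--graph description, and controlling the branching of the covers. The unbounded genus of $\Sigma_i$ can only escape to infinity through proliferating branch points of these covers, and the role of $k$--rigidity — through injectivity on \emph{all} covers up to the relevant degree — is to rule out this branching--driven concentration and to upgrade the naive injectivity of the linearized operator into a genuine isolation statement for the nonlinear equation. Making this rigorous requires a careful treatment of the quadratic term $Q$, of the multivaluedness of $s_i$, and of the finitely many singular points of $T$ at which the graphical description degenerates; these are the points where the compactness and regularity theory for $J$--holomorphic currents must be combined most delicately with the rigidity hypothesis.
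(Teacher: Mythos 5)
Your first half coincides with the paper's: argue by contradiction, apply \autoref{Lem_JHolomorphicCycleCompactness} to extract a limit cycle, and use the index formula \autoref{Eq_Index} together with conditions \autoref{Def_KRigidAlmostComplexStructure_NonNegativeIndex} and \autoref{Def_KRigidAlmostComplexStructure_Embedding} of \autoref{Def_KRigidAlmostComplexStructure} to conclude that the limit is smooth, connected, and of the form $m_1 C_\infty^1$ with $m_1$ at most the divisibility of $A$ (this is \autoref{Prop_CinftyRegularity}). The gap is in what you call the heart of the argument. You assert that injectivity of the normal operator pulled back to covers of degree at most $k$ ``supplies a uniform lower elliptic estimate'' forcing the graphical sections $s_i$ to vanish for large $i$. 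But injectivity of each member of a family of operators yields a uniform estimate only if the family is compact, and here it is not: the curves $C_n$ project to $C_\infty^1$ as branched covers of fixed degree $m_1 \leq k$ whose total ramification is $2g(\Sigma_n) - 2 - m_1\bigl(2g(C_\infty^1)-2\bigr)$ by Riemann--Hurwitz, so the branch points proliferate exactly when $g(\Sigma_n) \to \infty$ --- which is the very scenario the theorem must rule out. The $k$--rigidity hypothesis gives $\ker \fd^N_{u\circ\pi,J} = 0$ for each \emph{fixed} cover $\pi$; it does not by itself produce an estimate uniform over this non-compact, degenerating family of covers, nor does it control the multivalued graphical description near the accumulating branch points. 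This is precisely the obstacle you flag at the end, but flagging it does not close it, and closing it is a substantial analytic problem in its own right.

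The paper circumvents this difficulty with a rescaling trick rather than uniform estimates. Regarding the $C_n$ as $\exp^*\!J$--holomorphic curves in the normal bundle $NC_\infty^1$, one rescales by $\sigma_{\lambda_n}$ so that $d_H(\tilde C_n, C_\infty^1) = 1/2$. By \autoref{Prop_RescalingComplexStructure} the rescaled curves are holomorphic for almost complex structures $J_n$ converging to $J_u$, the structure associated with the normal Cauchy--Riemann operator, and by \autoref{Prop_PropertiesOfJD}~\autoref{Prop_PropertiesOfJD_Tamed} $J_u$ is tamed by a symplectic form on the disc bundle, so the masses of the $\tilde C_n$ stay bounded. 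One then applies \autoref{Lem_JHolomorphicCycleCompactness} a \emph{second} time --- crucially, this compactness is insensitive to genus --- and the normalization $d_H = 1/2$ forces the limit cycle to differ from the zero section. \autoref{Prop_CurvesInNormalBundle} then converts this single limit object into a nonzero element of $\ker \fd^N_{u\circ\varphi,J}$ for a single holomorphic cover $\varphi$ of degree at most the divisibility of $A \leq k$, contradicting $k$--rigidity. In short: where your route needs quantitative injectivity uniform across all degenerating covers, the paper's route needs only qualitative injectivity for the one cover produced by the second compactness argument, which is exactly what \autoref{Def_KRigidAlmostComplexStructure} provides.
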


\begin{remark}
  \autoref{Thm_KRigidityImpliesFiniteness} immediately implies a Castelnuovo bound for every \emph{fixed} $k$--rigid almost complex structure $J$.
  Unlike in the $n>3$ case of \autoref{Eq_HighDimensionalCastelnuovo}, however, this bound may depend on $J$.
\end{remark}

If $J$ is tamed by a symplectic form $\omega$,
then imposing an upper bound for the energy is superfluous since the energy of any $J$--holomorphic map representing $A$ is $\inner{[\omega]}{A}$.

\begin{cor}
  \label{Cor_SuperRigidityImpliesFinitenessCY3}
  Let $(M,\omega)$ be a compact symplectic Calabi--Yau $3$--fold.
  Suppose $J$ is a super-rigid almost complex structure compatible with $\omega$.
  Then for every $A \in H_2(M)$ there are only finitely many simple $J$--holomorphic maps representing $A$.
\end{cor}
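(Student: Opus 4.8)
The plan is to deduce the Corollary directly from \autoref{Thm_KRigidityImpliesFiniteness} applied with $k = \infty$, verifying that each hypothesis of that theorem holds automatically in the symplectic Calabi--Yau setting. First I would fix the almost Hermitian structure $(J, g)$ on $M$, where $g := \omega(\cdot, J\cdot)$ is the Riemannian metric determined by the compatible pair $(\omega, J)$; compatibility (rather than mere taming) is what guarantees that $g$ is a genuine metric. Since $\dim_\R M = 6$, this exhibits $(M, J, g)$ as a compact almost Hermitian $6$--manifold, as the theorem requires, and super-rigidity of $J$ is by definition $\infty$--rigidity, so we take $k = \infty$.

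Next I would dispose of the two conditions the theorem imposes on $A$. Because $M$ is symplectic Calabi--Yau, its first Chern class vanishes, $c_1(M, J) = 0$ — this is independent of the compatible $J$, as any two such are homotopic — so $\inner{c_1(M,J)}{A} = 0$ for \emph{every} $A \in H_2(M)$. With $k = \infty$ the hypothesis that $A$ have divisibility at most $k$ is vacuous. Hence both conditions on $A$ hold unconditionally.

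The last ingredient is the energy bound, and here I would use the observation recorded just before the Corollary: since $J$ is compatible with $\omega$ (in particular tamed by it), every $J$--holomorphic map $u \co \Sigma \to M$ representing $A$ has energy exactly $\inner{[\omega]}{A}$. Taking $\Lambda := \inner{[\omega]}{A}$, every simple $J$--holomorphic map representing $A$ satisfies the energy constraint, so \autoref{Thm_KRigidityImpliesFiniteness} produces only finitely many of them — and since no such map is excluded by the bound, this is the finiteness of \emph{all} simple $J$--holomorphic maps representing $A$. There is no real obstacle beyond correctly matching hypotheses; the only step deserving a sentence of care is this passage from energy-bounded finiteness to unconditional finiteness, which is precisely where compatibility of $J$ and $\omega$ (and not merely $\inner{c_1(M,J)}{A} = 0$) enters.
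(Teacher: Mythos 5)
Your proof is correct and follows exactly the route the paper intends: the corollary is stated without proof precisely because it is the special case $k=\infty$, $\Lambda = \inner{[\omega]}{A}$ of \autoref{Thm_KRigidityImpliesFiniteness}, combined with the vanishing of $c_1$ for symplectic Calabi--Yau $3$--folds and the remark preceding the corollary that taming fixes the energy of every representative of $A$. Your verification of the hypotheses, including the observation that the divisibility condition is vacuous for $k=\infty$, matches the paper's reasoning point for point.
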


In the situation of \autoref{Thm_KRigidityImpliesFiniteness},
Gromov's compactness theorem \cite{Gromov1985,Parker1993,Ye1994,Hummel1997} shows that there are only finitely many $J$--holomorphic maps representing $A$ from Riemann surfaces of \emph{fixed genus}.
It is thus of no use for proving \autoref{Thm_KRigidityImpliesFiniteness}.
Instead,
we use the following compactness result for \emph{$J$--holomorphic cycles},
that is: formal sums of $J$--holomorphic curves,
with respect to \emph{geometric convergence};
see \autoref{Def_JHolomorphicCycle} and \autoref{Def_GeometricConvergence}.

\begin{prop}
  \label{Lem_JHolomorphicCycleCompactness}
  Let $M$ be a manifold and let $(J_n,g_n)_{n \in \N}$ be a sequence of almost Hermitian structures converging to an almost Hermitian structure $(J,g)$ in the $C_\loc^\infty$--topology.
  Let $K \subset M$ be a compact subset and let $\Lambda > 0$.
  For each $n \in \N$ let $C_n$ be a $J_n$--holomorphic cycle with support contained in $K$ and of mass at most $\Lambda$.
  Then a subsequence of $(C_n)_{n \in \N}$ geometrically converges to a $J$--holomorphic cycle $C$.
\end{prop}

In dimension four, this result was proved by \citet{Taubes1996}.
The proof in higher dimensions relies on results in geometric measure theory;
in particular, the recent work of \citet{DeLellis2017,DeLellis2017a,DeLellis2017b,DeLellis2017c} on the regularity of semi-calibrated currents.
The points of this theory most relevant to the present article are discussed in \autoref{Sec_RegularityTheory}.

\begin{remark}
  \label{Rem_RiviereVsDeLellis}
  If $(M,\omega)$ is a symplectic manifold, $(J_n)_{n\in\N}$ is a sequence of $\omega$--compatible almost complex structures, and $g_n = \omega(\cdot, J_n\cdot)$ is the corresponding sequence of Riemannian metrics, then \autoref{Lem_JHolomorphicCycleCompactness} can be proved using earlier work of \citet{Riviere2009} on the regularity of \emph{calibrated} currents. 
  However, the proof of \autoref{Thm_KRigidityImpliesFiniteness} leads to almost complex structures which are tamed by but (possibly) not compatible with a symplectic structure.
  Therefore, the work of \citeauthor{DeLellis2017} is crucial even for  \autoref{Cor_SuperRigidityImpliesFinitenessCY3}.
\end{remark}

\begin{remark}
  \label{Rem_GVFiniteness}
  Since the first version of this article appeared,
  we used the $k=1$ case of \autoref{Thm_KRigidityImpliesFiniteness} to prove the Gopakumar--Vafa finitness conjecture for the BPS numbers $n^g_A(M,\omega)$ whenever $A$ is a primitive homology class \cite{Doan2019}. 
  The cited article also contains a version of \autoref{Thm_KRigidityImpliesFiniteness} for homology classes satisfying $\inner{c_1(M,\omega)}{A} > 0$. 
\end{remark}

\begin{convention}
  \label{Conv_Lesssim}
  Throughout this article, $f(x) \lesssim g(x)$ is an abbreviation for:
  $f(x) \leq cg(x)$ with a constant $c>0$ independent of $x$.
\end{convention}

\paragraph{Acknowledgements}
We thank Aleksey Zinger for insightful discussions,
Tristan Rivi\'ere for answering our questions regarding \cite{Riviere2009},
Costante Bellettini for pointing us towards the work of \citeauthor{DeLellis2017a}, and
Simon Donaldson for reminding us of Gromov's h-principle for symplectic immersions.
Finally, we thank the anonymous referee for detailed comments.

This material is based upon work supported by \href{https://sloan.org/fellowships/2018-Fellows}{an Alfred P. Sloan Fellowship},
\href{https://www.nsf.gov/awardsearch/showAward?AWD_ID=1754967&HistoricalAwards=false}{the National Science Foundation under Grant No.~1754967}, and
\href{https://sites.duke.edu/scshgap/}{the Simons Collaboration ``Special Holonomy in Geometry, Analysis and Physics''}.



\section{$k$--rigidity of $J$--holomorphic maps}
\label{Sec_SuperRigidity}

Let us briefly recall the notion of $k$--rigidity as defined by
\citeauthor{Eftekhary2016}.		
For a more detailed discussion we refer the reader to
\cites[Section 2]{Eftekhary2016}[Section 2.1]{Wendl2016} as well as \cite[Section 2.1]{Doan2018}.
The notation and definitions in this article are consistent with those used in the last reference. 

Henceforth,
let $(M,J,g)$ be an almost Hermitian $2n$--manifold;
that is:
$J$ is an almost complex structure and $g$ is a Riemannian metric such that $g(J\cdot, J\cdot) = g(\cdot, \cdot)$.
In particular, we \emph{do not} assume that the $2$--form $g(J\cdot,\cdot)$ is closed or that $M$ even admits a symplectic structure.

\begin{definition}
  \label{Def_JHolomorphicMap}
  A \defined{$J$--holomorphic map} $u \co (\Sigma,j) \to (M,J)$ is a pair consisting of
  a closed, connected Riemann surface $(\Sigma,j)$ and
  a smooth map $u\co \Sigma \to M$ satisfying the non-linear Cauchy--Riemann equation
  \begin{equation}
    \label{Eq_JHolomorphic}
    \delbar_J(u,j) \coloneq \frac12(\rd u + J(u)\circ \rd u\circ  j) = 0.
    \qedhere
  \end{equation}
\end{definition}

\begin{definition}
  Let $u \co (\Sigma,j) \to (M,J)$ be a $J$--holomorphic map.
  Let $\phi \in \Diff(\Sigma)$ be a diffeomorphism.
  The \defined{reparametrization} of $u$ by $\phi$ is the $J$--holomorphic map $u \circ \phi^{-1} \co (\Sigma,\phi_*j) \to (M,J)$.
\end{definition}

\begin{definition}
  Let $u\co (\Sigma,j) \to (M,J)$ be a $J$--holomorphic map and let $\pi\co (\tilde \Sigma,\tilde j) \to (\Sigma,j)$ be a holomorphic map of degree $\deg(\pi) \geq 2$.
  The composition $u\circ \pi \co (\tilde\Sigma,\tilde j) \to (M,J)$ is said to be a \defined{multiple cover of $u$}.
  A $J$--holomorphic map is \defined{simple} if it is not constant and not a multiple cover.
\end{definition}

Rigidity and $k$--rigidity are conditions on the infinitesimal deformation theory of $J$--holomorphic curves up to reparametrization.
We will have to briefly review parts of this theory.
The reader can find further details in \cite[Chapter 3]{McDuff2012} and \cite[Lectures 2 and 7]{Wendl2016a}, for example.
The second reference, in particular, discusses varying the complex structure on higher genus Riemann surfaces.

The index of a $J$--holomorphic map $u\co (\Sigma,j) \to (M,J)$ is defined as
\begin{equation}
  \label{Eq_Index}
  \ind(u) \coloneq  2\inner{u^*c_1(M,J)}{[\Sigma]} + (2n-6)(1-g(\Sigma)).
\end{equation}
This is the Fredholm index of the linearization of \autoref{Eq_JHolomorphic} with respect to the map $u$ and complex structure $j$ (up to equivalence).
The linearization with respect to $u$, with $j$ fixed, is the operator
\begin{equation}
  \label{Eq_DU}
  \xi
  \mapsto
  \frac12\paren*{\nabla \xi + J\circ (\nabla \xi)\circ j + (\nabla_\xi J)\circ \rd u\circ j}.
\end{equation}
Here $\nabla$ denotes the Levi--Civita connection of $g$ on $TM$ and also the induced connection on $u^*TM$  \cite[Proposition 3.1.1]{McDuff2012}.

Let $u\co (\Sigma,j) \to (M,J)$ be a non-constant $J$--holomorphic map.
There exists a unique complex subbundle 
\begin{equation*}
  Tu \subset u^*TM
\end{equation*}
of rank one containing $\rd u(T\Sigma)$ 
 \cites[Section 1.3]{Ivashkovich1999}.
The generalized normal bundle of $u$ is defined as
\begin{equation*}
  Nu \coloneq u^*TM/Tu.
\end{equation*}
If $u$ is an immersion, then $Nu$ is the usual normal bundle.
If $\tilde u = u\circ \pi$ is a multiple cover of an immersion, then $N\tilde u = \pi^*Nu$.
The operator \eqref{Eq_DU} maps $\Gamma(Tu)$ to $\Omega^{0,1}(\Sigma,Tu)$.
Thus, it induces an operator
\begin{equation}
  \label{Eq_NormaCauchyRiemann}
  \fd_{u,J}^N \co \Gamma(Nu) \to \Omega^{0,1}(Nu)
\end{equation}
called the \defined{normal Cauchy--Riemann operator of $u$} \cite[(1.5.1)]{Ivashkovich1999}.
The non-zero elements of the kernel of $\fd_{u,J}^N$ correspond to infinitesimal deformations of $u$ which deform the image $u(\Sigma)$.
The reader might find the summaries of \citeauthor{Ivashkovich1999}'s construction of $Tu$, $Nu$, and $\fd_{u,J}^N$  given in \cites[Section 3.3]{Wendl2010}[Appendix 2A]{Doan2018} helpful.

\begin{definition}
  \label{Def_RigidMap}
  A non-constant $J$--holomorphic map $u$ is \defined{rigid} if $\ker \fd_{u,J}^N = 0$.
\end{definition}

A multiple cover $\tilde u$ of $u$ may fail to be rigid,
even if $u$ itself is rigid.

\begin{definition}
  \label{Def_KRigidMap}
  Let $k \in \N \cup \set{\infty}$.
  A simple $J$--holomorphic map $u\co (\Sigma,j) \to (M,J)$ is called \defined{$k$--rigid} if it is rigid and all of its multiple covers of degree at most $k$ are rigid.
\end{definition}

Rigidity and $k$--rigidity are mostly interesting for maps of index zero, as it follows from the index formula for the normal Cauchy--Riemann operator \cite[Lemma 1.5.1]{Ivashkovich1999} (see also \cites[Theorem 3]{Wendl2010}[Proposition 2.7.1]{Doan2018})  and standard transversality results that for a generic $J$ there are no rigid simple $J$--holomorphic maps satisfying $\ind(u) \neq 0$.  

\begin{definition}
  \label{Def_KRigidAlmostComplexStructure}
  Suppose that $\dim M \geq 6$. 
  Let $k \in \N \cup \set{\infty}$.
  An almost complex structure $J$ is called \defined{$k$--rigid} if the following hold:
  \begin{enumerate}
  \item
    \label{Def_KRigidAlmostComplexStructure_KRigid}
    Every simple $J$--holomorphic map of index zero is $k$--rigid.
  \item
    \label{Def_KRigidAlmostComplexStructure_NonNegativeIndex}
    Every simple $J$--holomorphic map has non-negative index.
  \item
    \label{Def_KRigidAlmostComplexStructure_Embedding}
    Every simple $J$--holomorphic map of index zero is an embedding, and
    every two simple $J$--holomorphic maps of index zero either have disjoint images or are related by a reparametrization.
    \qedhere
  \end{enumerate}
\end{definition}

\begin{remark}
  In dimension four, one should weaken \autoref{Def_KRigidAlmostComplexStructure_Embedding} and require only that every simple $J$--holomorphic map of index zero is an immersion with transverse self-intersections, and that two such maps are either transverse to one another or are related by reparametrization.
  However, we will only be concerned with dimension (at least) six.
\end{remark}

\begin{definition}
  \label{Def_JKRigid}
  Denote by $\sJ(M)$ the Fr\'echet space of smooth almost complex structures on $M$ equipped with the topology of $C^\infty$ convergence over compact subsets.
  If $\omega$ is a symplectic form on $M$, denote  by $\sJ(M,\omega)$ the subspace of almost complex structures in $\sJ(M)$ compatible with $\omega$. 
  For $k \in \N \cup \set{\infty}$ set
  \begin{equation*}
    \sR_k(M)
    \coloneq
    \set*{
      J \in \sJ(M)
      :
      J
      \textnormal{ is $k$--rigid}
    }
    \qandq
    \sR_k(M,\omega)
    \coloneq
    \sR_k(M) \cap \sJ(M,\omega).
    \qedhere
  \end{equation*}
\end{definition}

\begin{theorem}[{\citet[Theorem A]{Wendl2016}}]
  \label{Thm_SuperRigidity}
  Let $M$ be a manifold of dimension at least six.
  The following hold:
  \begin{enumerate}
    \item The subset $\sR_\infty(M)$ is comeager in $\sJ(M)$.
    \item The subset $\sR_\infty(M,\omega)$ is comeager in $\sJ(M,\omega)$. 
  \end{enumerate}
\end{theorem}

This result establishes the \emph{super-rigidity conjecture} of \citet[p.~290]{Bryan2001}.
Earlier progress on this conjecture was made by \citet[Theorem 1.2]{Eftekhary2016} who showed that $\sR_4(M,\omega)$ is comeager in $\sJ(M,\omega)$ if $\dim M = 6$.



\section{Real Cauchy--Riemann operators and almost complex structures}
\label{Sec_AlmostComplexGeometry}

The purpose of this section is to explain that associated with every real Cauchy--Riemann operator defined on a Hermitian vector bundle there is a natural almost complex structure on the total space of that bundle.
This construction is inspired by \cite[p. 825--826]{Taubes1996b}.
In fact,
the results below can be found in \cites[Section 3.2]{Zinger2011}[Appendix B]{Wendl2016}.
Nevertheless,
we include them here for the reader's convenience.

\begin{definition}
  \label{Def_RealCauchyRiemann}
  Let $(\Sigma,j)$ be a Riemann surface.
  Let $\pi\co E \to \Sigma$ be a Hermitian vector bundle over $\Sigma$.
  A first order linear differential operator $\fd \co \Gamma(E) \to \Omega^{0,1}(\Sigma,E)$ is called a \defined{real Cauchy--Riemann operator}
  if
  \begin{equation}
    \label{Eq_RealCauchyRiemann}
    \fd(fs) = (\delbar f)s + i\fd j
  \end{equation}  
  for all $f \in C^\infty(M,\R)$.
  The \defined{anti-linear part} of $\fd$ is defined as
  \begin{equation*}
    \fn = \fn_\fd \coloneq \frac12(\fd + J\fd J) \in \Gamma(\Hom(E,\overline\Hom_\C(T\Sigma,E))).
    \qedhere.
  \end{equation*}
\end{definition}

Every real Cauchy--Riemann operator can be written as
\begin{equation*}
  \fd = \delbar + \fn
\end{equation*}
where $\delbar_\nabla \coloneq \nabla^{0,1}$ is the Dolbeault operator associated with a Hermitian connection $\nabla$ on $E$.
Denote by $H_{\nabla} \subset TE$ the horizontal distribution of $\nabla$.
It induces an isomorphism
\begin{equation}
  \label{Eq_HorizontalVertical}
  TE
  = H_{\nabla} \oplus \pi^* E
  \iso \pi^* T\Sigma \oplus \pi^*E.
\end{equation}

\begin{definition}
  \label{Def_JNabla}
  The \defined{complex structure} $J_\nabla$ on $E$ associated with $\nabla$ is defined by pulling back the standard complex structure $j\oplus i$ on $\pi^* T\Sigma \oplus \pi^*E$ by the isomorphism \autoref{Eq_HorizontalVertical}.
\end{definition}

It is well-known that a section $s \in \Gamma(E)$ satisfies $\delbar_\nabla s = 0$ if and only if the map $s \co \Sigma \to E$ is $J_\nabla$--holomorphic.
The following proposition extends this to real Cauchy--Riemann operators.

\begin{definition}
  \label{Def_JD}
  Let $\fd = \delbar_\nabla + \fn$ be a real Cauchy--Riemann operator.
  Define $L_\fn \co TE \to TE$ by
  \begin{equation*}
    L_\fn = - 2\fn(v) j \pi_*
  \end{equation*}
  at $v \in E$.
  The \defined{almost complex structure} $J_\fd$ on $E$ associated with $\fd$ is defined by
  \begin{equation*}
    J_\fd \coloneq J_{\nabla} + L_\fn.
    \qedhere
  \end{equation*}
\end{definition}

\begin{lemma}
  \label{Prop_PropertiesOfJD}
  For every real Cauchy--Riemann operator $\fd \co \Gamma(E) \to \Omega^{0,1}(E)$ the following hold:
  \begin{enumerate}
  \item
    \label{Prop_PropertiesOfJD_AlmostComplex}
    $J_\fd$ is an almost complex structure.
  \item
    \label{Prop_PropertiesOfJD_Projection}
    The projection $\pi \colon E \to \Sigma$ is holomorphic with respect to $J_\fd$.
  \item
    \label{Prop_PropertiesOfJD_Fiber}
    For every $x \in \Sigma$ the fiber $E_x = \pi^{-1}(x)$ is a $J_\fd$--holomorphic submanifold of $E$.
  \item
    \label{Prop_PropertiesOfJDA_Section}
    A section $s \in \Gamma(E)$ satisfies $\fd s = 0$ if and only if $s \co \Sigma \to E$ is a $J_\fd$--holomorphic map.
  \item
    \label{Prop_PropertiesOfJD_Tamed}
    Denote by $B_1(E) \coloneq \set{ e \in E : \abs{e} < 1}$ the disc bundle of $E$ with respect to the given Hermitian inner product on $E$.  
    There exists a symplectic form $\omega$ on the total space of $B_1(E)$ which tames $J_\fd$.
  \end{enumerate}
\end{lemma}

\begin{proof}
  With respect to \autoref{Eq_HorizontalVertical} we have
  \begin{equation}
    \label{Eq_JDMatrix}
    J_\fd =
    \begin{pmatrix}
      j & 0 \\
      -2\fn(v)j & i
    \end{pmatrix} 
  \end{equation}
  at $v \in E$.
  Since $\fn(v)$ is anti-linear,
  \begin{equation*}
    \fn(v)j^2 + i\fn(v)j = 0.
  \end{equation*}
  Therefore,
  \begin{equation*}
    J_\fd^2 = -\id;
  \end{equation*}
  that is, \autoref{Prop_PropertiesOfJD_AlmostComplex} holds.

  Both \autoref{Prop_PropertiesOfJD_Projection} and \autoref{Prop_PropertiesOfJD_Fiber} immediately follow from \eqref{Eq_JDMatrix}.

  We prove \autoref{Prop_PropertiesOfJDA_Section}.
  Let $s \colon \Sigma \to E$ be a section.
  The projection of $\rd s$ to the first factor of \autoref{Eq_HorizontalVertical} is $\pi_* \circ \rd s = \id_{T\Sigma}$ and thus $j$--linear.
  The projection of $\rd s \colon T\Sigma \to s^*TE$ to the second factor is its covariant derivative $\nabla s \colon T\Sigma \to s^*E$.
  It follows from \autoref{Eq_JDMatrix} that the $J_\fd$--antilinear part of $\rd s$ is
  \begin{align*}
    \frac12(\rd s + J_\fd \circ \rd s \circ j)
     &=
     \frac12( \nabla s + i \circ \nabla s \circ j) + \fn s \\
     &=
      \delbar_{\nabla}s + \fn s = \fd s.
  \end{align*}
  Therefore,
  $\rd s \colon T\Sigma \to TE$ is $J_\fd$--linear if and only if $\fd s = 0$.
  
  The construction of a symplectic form $\omega$ in \autoref{Prop_PropertiesOfJD_Tamed} is standard and goes back to \citet{Thurston1976};  see also \cites[Lemma 2.2]{Gompf1995}[Theorem 6.3]{McDuff1998}[paragraph containing (2.9)]{Tehrani2018}.
  Nevertheless, let us discuss the proof of \autoref{Prop_PropertiesOfJD_Tamed}.
  Let $\omega_\Sigma$ be an area form on $\Sigma$.
  Let $\omega_E$ be any closed $2$--form on $B_1(E)$ which is positive when restricted to the fibers of $E$;
  that is, for all vertical tangent vectors $v_E$ 
  \begin{equation}
    \label{Eq_FiberwisePositivity}
    \omega_E(v_E, J_\nabla v_E) \gtrsim \abs{v_E}^2.
  \end{equation}
  Such a form can be constructed by choosing local unitary trivializations $E|_{U_i} \iso U_i \times \C^r$, denoting by $\lambda_i$ the corresponding Liouville $1$--forms on $\C^r $ vanishing at zero, and setting
  \begin{equation*}
    \omega_E = \rd\(\sum_i \chi_i\circ\pi \cdot \lambda_i\)
  \end{equation*}
  for a partition of unity $(\chi_i)$.
  This form satisfies \autoref{Eq_FiberwisePositivity} on $E$.
  It remains to show that for $\tau \gg 1$ the closed $2$--form $\omega = \tau\omega_\Sigma + \omega_E$ tames $J_\fd$ on $B_1(E)$.
  For a tangent vector $w$ to $E$ at a point $(x,v) \in B_1(E)$ denote by $w_H$ and $w_E$ its horizontal and vertical parts in the decomposition \autoref{Eq_HorizontalVertical}.
  We have
  \begin{align*}
    \omega(w, J_\fd w)
    &=
      (\tau\omega_\Sigma + \omega_E)(w, (J_\nabla + L_\fn) w) \\
    &=
      \tau\omega_\Sigma(w_H,jw_H) + \omega_E(w_E, J_\nabla w_E) + \omega_E(w_E, L_\fn w_H).
  \end{align*}
  From $\abs{L_\fn(v)} \lesssim \abs{v} < 1$ it follows that
  \begin{equation*}
    \abs{\omega_E(w_E, L_\fn w_H)}
    \lesssim
    \abs{w_E}\abs{w_H}.
  \end{equation*}
  Since
  \begin{equation*}
    \tau\omega_\Sigma(w_H,jw_H) + \omega_E(w_E, J_\nabla w_E)
    \gtrsim
    \tau\abs{w_H}^2 + \abs{v_E},
  \end{equation*}
  it follows that $\omega$ tames $J_\fd$ provided $\tau \gg 1$.
\end{proof}

The next two propositions are concerned with the following situation.
Let $(M,J,g)$ be an almost Hermitian manifold and let $u \co (\Sigma,j) \to (M,J)$ be a $J$--holomorphic embedding.
Denote by $Nu \to \Sigma$ its normal bundle and by $\fd_{u,J}^N$ the normal Cauchy--Riemann operator introduced in \eqref{Eq_NormaCauchyRiemann}.
The almost complex structure $J$ and Riemannian metric $g$ on $M$ induce a Hermitian structure on $E$. 
Write
\begin{equation}
  \label{Eq_JU}
  J_u \coloneq J_{\fd_{u,J}^N}
\end{equation}
for the almost complex structure on the total space of $Nu$ associated with $\fd_{u,J}^N$.

\begin{lemma}
  \label{Prop_RescalingComplexStructure}
  For every $\lambda > 0$ define $\sigma_\lambda\co Nu \to Nu$ by
  \begin{equation*}
    \sigma_\lambda(v) \coloneq \lambda v.
  \end{equation*}
  If $U \subset Nu$ is an open neighborhood of the zero section in $Nu$ such that the exponential map $\exp \co U \to M$ with respect to  $g$ is an embedding,
  then
  \begin{equation*}
    \sigma_\lambda^*\exp^*J \to J_u
    \quad\text{as}\quad
    \lambda \to 0,
  \end{equation*}
  where the convergence is with respect to the $C_\loc^\infty$--topology.
\end{lemma}

\begin{proof} 
  Denote by $\nabla$ the connection on $Nu \to \Sigma$ induced by the Levi--Civita connection of $(M,g)$. 
  Throughout this proof,
  we identify
  \begin{equation*}
    TU = \pi^*T\Sigma \oplus \pi^*Nu
  \end{equation*}
  as in \autoref{Eq_HorizontalVertical}.  
  The two almost complex structures $J_{\nabla}$ and $\exp^*J$ on $U \subset Nu$ agree along the zero section.
  The Taylor expansion of $\exp^*J$ is of the form
  \begin{equation}
    \label{Eq_TaylorExpansion}
    \exp^*J(x,v) = J_\nabla(x,0) + \nabla_v J(x,0) + O(\abs{v}^2).
  \end{equation}
  
  Set
  \begin{equation*}
    L(x,v) \coloneq \nabla_n J(x,0).
  \end{equation*}  
  We write $L$ as the matrix 
  \begin{equation*}
    L(x,v) = 
    \begin{pmatrix}
      L_{11}(x,v) & L_{12}(x,v) \\
      L_{21}(x,v) & L_{22}(x,v)
    \end{pmatrix}.
  \end{equation*}
  Here each $L_{ij}$ is linear in $v$.
  The derivative $\rd \sigma_{\lambda}$ is given by
  \begin{equation*}
    \rd \sigma_{\lambda}
    = 
    \begin{pmatrix}
      \id & \\
      & \lambda
    \end{pmatrix}.
  \end{equation*}
  Therefore,
  \begin{align*}
    (\sigma_\lambda)^*L(x,v)
    &=
      \begin{pmatrix}
        \id & \\
        & \lambda^{-1}
      \end{pmatrix}
      \begin{pmatrix}
        L_{11}(x,\lambda v) & L_{12}(x,\lambda v) \\
        L_{21}(x,\lambda v) & L_{22}(x,\lambda v)
      \end{pmatrix}
      \begin{pmatrix}
        \id & \\
        & \lambda
      \end{pmatrix} \\
    &=
      \begin{pmatrix}
        \lambda L_{11}(x,v) & \lambda^2 L_{12}(x,v) \\
        L_{21}(x,v) & \lambda L_{22}(x, v)
      \end{pmatrix}.
  \end{align*}
  As $\lambda$ tend to zero, all but the bottom left entry tend to zero.

  By construction, $\sigma_\lambda^* J_\nabla = J_\nabla$.
  As $\lambda$ tends to zero,
  the rescalings of terms of second order and higher in \autoref{Eq_TaylorExpansion} tend to zero.
  It remains to identify the term $L_{21}$.
  By definition,
  \begin{equation*}
    L_{21}(x,v) = \pi_{Nu} \circ \nabla_v J(x,0) \circ \pi_*. 
  \end{equation*}
  Comparing \eqref{Eq_DU}, \autoref{Def_RealCauchyRiemann}, and  \autoref{Def_JD},
  we see that $L_{21} = L_u$.
  This finishes the proof.
\end{proof}

\begin{prop}
  \label{Prop_CurvesInNormalBundle}
  If $\tilde u \colon (\tilde\Sigma, \tilde j) \to (Nu, J_u)$ is a simple $J_u$--holomorphic map  whose image is not contained in the zero section,
  then the following hold:
  \begin{enumerate}
  \item
    \label{Prop_CurvesInNormalBundle_VarphiHolomorphic}
    The map $\varphi \colon (\tilde\Sigma, \tilde j) \to (\Sigma,j)$ given by $\varphi \coloneq \pi \circ \tilde u$ is non-constant and holomorphic.
  \item
    \label{Prop_CurvesInNormalBundle_NotSuperRigid}
    The $J$--holomorphic map $u \circ \varphi \colon (\tilde\Sigma,\tilde j) \to (M,J)$ is not rigid;
    in particular, the $J$--holomorphic map $u \colon (\Sigma,j) \to (M,J)$ is not $k$--rigid for $k = \deg(\varphi)$.
  \end{enumerate}
\end{prop}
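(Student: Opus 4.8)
The plan is to read off everything from the explicit matrix presentation \eqref{Eq_JDMatrix} of $J_u$ in the horizontal–vertical splitting \eqref{Eq_HorizontalVertical}, combined with the naturality of the construction $\fd \rightsquigarrow J_\fd$ under holomorphic composition. First I would dispatch \autoref{Prop_CurvesInNormalBundle_VarphiHolomorphic}. Since $\tilde u$ is $J_u$--holomorphic and the projection $\pi$ is $(J_u,j)$--holomorphic by \autoref{Prop_PropertiesOfJD_Projection}, the composition $\varphi = \pi \circ \tilde u$ is automatically $(\tilde j, j)$--holomorphic. To see that $\varphi$ is non-constant, suppose it were constant with value $x$. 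Then the image of $\tilde u$ lies in the fiber $(Nu)_x$, which is a $J_u$--holomorphic submanifold by \autoref{Prop_PropertiesOfJD_Fiber} and, by \eqref{Eq_JDMatrix}, carries the standard complex structure of a complex vector space. A holomorphic map from the closed Riemann surface $\tilde\Sigma$ into a complex vector space is constant, so $\tilde u$ would be constant, contradicting the hypothesis that $\tilde u$ is simple. Hence $\varphi$ is non-constant.

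For \autoref{Prop_CurvesInNormalBundle_NotSuperRigid} the first step is to record that, because $\pi \circ \tilde u = \varphi$, the map $\tilde u$ is a lift of $\varphi$ through $\pi$, i.e.\ a section $\tilde s \in \Gamma(\varphi^* Nu)$ with $\tilde u = \Phi \circ \tilde s$, where $\Phi \co \varphi^* Nu \to Nu$ is the tautological bundle map covering $\varphi$. The key naturality input is that $\fd^N_{u\circ\varphi,J} = \varphi^*\fd^N_{u,J}$ as operators on $\Gamma(\varphi^* Nu)$. This rests on the identification $N(u\circ\varphi) = \varphi^* Nu$ — which follows, exactly as in the multiple-cover remark preceding \eqref{Eq_NormaCauchyRiemann}, from the fact that the tautological line bundles satisfy $T(u\circ\varphi) = \varphi^* Tu$ — together with the observation that both the Dolbeault part $\delbar_\nabla$ and the antilinear part $\fn$ of the normal operator pull back pointwise along the holomorphic map $\varphi$.

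The heart of the argument is then to decompose $\rd\tilde u$ in the splitting \eqref{Eq_HorizontalVertical} into its horizontal component $\rd\varphi$ and vertical component $\nabla\tilde s$, and substitute the matrix \eqref{Eq_JDMatrix} into the holomorphicity equation $\rd\tilde u \circ \tilde j = J_u \circ \rd\tilde u$. The diagonal entries reproduce the holomorphicity of $\varphi$ already established, while the off-diagonal entry gives precisely the vanishing of the $(0,1)$--part that is the content of \autoref{Prop_PropertiesOfJDA_Section}, now carried out fiberwise along $\varphi$; it reads $(\varphi^*\fd^N_{u,J})\tilde s = 0$. Thus $\tilde s \in \ker \fd^N_{u\circ\varphi,J}$, and since the image of $\tilde u$ is not contained in the zero section we have $\tilde s \not\equiv 0$, so $\ker \fd^N_{u\circ\varphi,J} \neq 0$; that is, $u\circ\varphi$ is not rigid. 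As $\varphi$ is a non-constant holomorphic map of closed surfaces it is a branched cover of degree $k \coloneq \deg\varphi \geq 1$, so $u\circ\varphi$ is a degree-$k$ multiple cover of $u$ (a reparametrization when $k=1$); by \autoref{Def_KRigidMap} the failure of this cover to be rigid means exactly that $u$ is not $k$--rigid.

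The main obstacle is the naturality identity $\fd^N_{u\circ\varphi,J} = \varphi^*\fd^N_{u,J}$ and the clean translation between $J_u$--holomorphicity of $\tilde u$ and membership of $\tilde s$ in the kernel of the pulled-back operator. The delicate point is the behaviour at the branch points of $\varphi$, where $\rd\varphi$ degenerates and $\Phi$ fails to be an immersion: there the identifications $N(u\circ\varphi) = \varphi^* Nu$ and the equality of operators cannot be obtained from a pointwise isomorphism but must be argued by uniqueness of the line bundles $Tu$, $T(u\circ\varphi)$ and continuity from the dense complement of the critical set.
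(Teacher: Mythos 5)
Your proof is correct, and its overall skeleton coincides with the paper's: part \autoref{Prop_CurvesInNormalBundle_VarphiHolomorphic} is argued identically, and for part \autoref{Prop_CurvesInNormalBundle_NotSuperRigid} both you and the paper build the section $s \in \Gamma(\varphi^*Nu)$ out of $\tilde u$, invoke the naturality identity $\fd_{u\circ\varphi,J}^N = \varphi^*\fd_{u,J}^N$ of \autoref{Eq_DNPullBack}, and exhibit $s$ as a non-zero element of $\ker \fd_{u\circ\varphi,J}^N$. Where you genuinely diverge is in how the equation $(\varphi^*\fd_{u,J}^N)s = 0$ is established. The paper reduces to the already-proven \autoref{Prop_PropertiesOfJD}~\autoref{Prop_PropertiesOfJDA_Section}: away from the critical values of $\varphi$ the map $\varphi$ is an unbranched covering, so locally $\tilde u$ is the graph of a genuine $J_u$--holomorphic section $f$ over an open set in $\Sigma$; that proposition gives $\fd_{u,J}^N f = 0$, pulling back by the local biholomorphism gives the equation away from the branch locus, and smoothness of $s$ extends it across the finitely many remaining points. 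You instead rerun the matrix computation behind \autoref{Prop_PropertiesOfJDA_Section} directly for the section $s$ along $\varphi$: decomposing $\rd \tilde u$ in the splitting \autoref{Eq_HorizontalVertical} into $\rd\varphi$ and the covariant derivative of $s$ with respect to the pulled-back connection, and substituting \autoref{Eq_JDMatrix}, the horizontal component of the holomorphicity equation is the holomorphicity of $\varphi$ and the vertical component is exactly $(\varphi^*\fd_{u,J}^N)s = 0$ (using $j\circ\rd\varphi = \rd\varphi\circ\tilde j$ to convert the off-diagonal term $-2\fn(s)\,j\,\rd\varphi\,\tilde j$ into $2\fn(s)\,\rd\varphi$). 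This buys you a computation that is pointwise valid on all of $\tilde\Sigma$, branch points included, so you avoid the paper's density-and-continuity step for the equation itself; the small price is that you must justify that the vertical part of $\rd\tilde u$ is the pullback covariant derivative of $s$, which you use implicitly and should state. Both arguments still rest on the identification $N(u\circ\varphi) = \varphi^*Nu$ and the operator identity \autoref{Eq_DNPullBack}, whose only delicate points are the branch points of $\varphi$; you flag this correctly, while the paper takes it from the earlier discussion of multiple covers of immersions.
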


\begin{proof}  
  By \autoref{Prop_PropertiesOfJD}~\autoref{Prop_PropertiesOfJD_Projection},
  $\pi \colon Nu \to \Sigma$ is $J_u$--holomorphic.
  Therefore, $\varphi$ is holomorphic.
  The map $\varphi$ is constant if and only if the image of $\tilde u$ is contained in a fiber of $\pi$.
  This is impossible, because then $\tilde u$ would be constant.
  This proves \autoref{Prop_CurvesInNormalBundle_VarphiHolomorphic}.

  To prove \autoref{Prop_CurvesInNormalBundle_NotSuperRigid}, we use the fact that the normal bundle of the $J$--holomorphic map $u \circ \varphi$ is $N{u \circ \varphi} = \varphi^* Nu$ and the corresponding normal Cauchy--Riemann operator is
  \begin{equation}
    \label{Eq_DNPullBack}
    \fd_{u \circ \varphi, J}^N = \varphi^* \fd_{u,J}^N.
  \end{equation}
  Since $\tilde u$ takes values in $Nu$, for every $x \in \tilde\Sigma$ we have
  \begin{equation*}
    \tilde u(x) \in Nu_{\pi(u(x))} = Nu_{\varphi(x)} = (\varphi^*Nu)_x.
  \end{equation*} 
  This gives rise to the section $s \in \Gamma(\varphi^* Nu)$ defined by
  \begin{equation*}
    s(x) \coloneq \tilde u(x) \in (\varphi^* Nu)_x.
  \end{equation*}  
  This section is not the zero section, because the image of $\tilde u$ is not contained in the zero section.
  By construction,  $s$ is holomorphic with respect to the almost complex structure on $\phi^*Nu$ induced from $J_u$.
  \autoref{Prop_PropertiesOfJD}~\autoref{Prop_PropertiesOfJDA_Section} and \autoref{Eq_DNPullBack} imply that
  \begin{equation*}
    \fd_{u\circ\varphi,J}^Ns = 0. \qedhere
  \end{equation*}
\end{proof}



\section{Regularity theory for $2$--dimensional semicalibrated currents}
\label{Sec_RegularityTheory}

The purpose of this section is to introduce a few notions of geometric measure theory and explain \autoref{Lem_SingularityStructure} due to \citeauthor{DeLellis2017}.
The standard reference for the foundations of geometric measure theory is \citeauthor{Federer1969}'s voluminous monograph \cite{Federer1969}.
The references \cite{Simon1983,DeLellis2016} are more accessible and easier to navigate,
and are cited throughout this section.

\begin{definition}[{cf. \cites[Chapter 6 Definition 26.1 and Paragraphs 26.3, 26.10, 26.11]{Simon1983}[Definitions 2.1, 2.2]{DeLellis2016}}]
  Let $M$ be a manifold. 
  Let $\Omega_c^k(M)$ be the space of $k$--forms with compact support equipped with the strong $C^\infty$ topology.%
  \footnote{For the definition, see, for example, \cite[Chapter II Section 3]{Golubitsky1980}. 
    A sequence $(\alpha_n)_{n\in\N}$ in $\Omega_c^k(M)$ converges in this topology if and only if there is a compact subset $K \subset M$ such that $\supp \alpha_n \subset K$ for all sufficiently large $n$ and $(\alpha_n)_{n\in\N}$ converges uniformly with all derivatives over $K$.  
    If $M$ is compact, then the strong $C^\infty$ topology agrees with the standard $C^\infty$ topology making $\Omega^k(M)$ into a Fr\'echet space.
    If $M$ is non-compact, then the strong $C^\infty$ topology is not metrizable.
  }
  \begin{enumerate}
  \item
    A \defined{$k$--current} in $M$ is a continuous linear map $T \co \Omega^k_c(M) \to \R$.
  \item 
    A sequence $(T_n)_{n\in\N}$ of $k$--currents \defined{converges weakly} to a $k$--curent $ T$ if
    \begin{equation*}
      \lim_{n\to\infty} T_n(\alpha) = T(\alpha) \qforeveryq \alpha \in \Omega^k_c(M).
    \end{equation*}
  \item
    The \defined{boundary} of a $k$--current $T$ is the $(k-1)$--current $\partial T$ defined by
    \begin{equation*}
      \partial T(\alpha) \coloneq  T(\rd \alpha)
    \end{equation*}
    We say that $ T$ is \defined{closed} if $\partial T = 0$. 
  \item
    The \defined{support} of a $k$--current $T$, denoted by $\supp(T)$, is the intersection of all closed subsets $A \subset M$ with the property that  $T(\alpha) = 0$ for all $\alpha \in \Omega^k_c(M)$ with $\supp \alpha \cap A = \emptyset$.
  \item
    Given an open subset $U \subset M$ and a $k$--current $T$, the \defined{restriction of $T$ to $U$} is the $k$--current $T|_U \co \Omega^k_c(U) \to \R$ defined by
    \begin{equation*}
      T|_U(\alpha) \coloneq T(\iota_*\alpha)
    \end{equation*}
    with $\iota_* \co \Omega^k_c(U) \to \Omega^k_c(M)$ denoting the map extending a $k$--form with compact support in $U$ by zero on $M \setminus U$.
    \qedhere    
  \end{enumerate}  
\end{definition}  

The archetypal example of a $k$--current is the following.

\begin{example}
  \label{Ex_DiracDelta}
  Let 
  \begin{equation*}
    A = \sum_{i=1}^I m_i A_i
  \end{equation*}
  be a formal linear combination of oriented $k$--dimensional $C^1$ submanifolds $A_i \subset M$ possibly with non-empty boundary $\partial A_i$ and with coefficients $m_1, \ldots, m_I \in \N$. 
  The \defined{Dirac delta associated with $A$} is the $k$--current $\delta_A \co \Omega^k_c(M) \to \R$ defined by
  \begin{equation*}
    \delta_A(\alpha)
    \coloneq
    \sum_{i=1}^I m_i \int_{A_i} \alpha.
  \end{equation*} 
  We have $\supp \delta_A = \bigcup_{i=1}^I A_i$. 
  The boundary of $A$ is the Dirac delta associated with the formal sum
  \begin{equation*}
    \partial A = \sum_{i=1}^I m_i \partial A_i.
  \end{equation*}
  More generally, $\delta_A$ can be defined in the same way if each $A_i$ is an oriented $k$--dimensional $C^1$ submanifolds away from a subset whose $k$--dimensional Hausdorff measure is zero. 
\end{example}

\begin{definition}[{cf. \cite[Definition 3.2]{DeLellis2016}}]
  Let $T$ be a $k$--current in $M$.
  A point $x \in \supp(T) \setminus \supp (\partial T)$ is \defined{regular} if there exists an open neighborhood $U$ of $x$ such that $T|_U = \delta_{mA}$ for an oriented $k$--dimensional $C^1$ submanifold $A \subset U$ and $m \in \N$. 
  Otherwise we say that $x$ is \defined{singular}.
  Denote by $\reg(T)$ and $\sing(T) $ the sets of regular and singular points in $\supp(T) \setminus \supp (\partial T)$.
\end{definition}

A generalization of the above example is the notion of an integral current, which we now describe. 
For the remainder of this section, let $(M,g)$ be a Riemannian manifold and let $\sH^k$ denote the $k$--dimensional Hausdorff measure induced by $g$.

\begin{remark}
  \label{Rem_GMTRiemannianManifolds}
  While the definitions and results of geometric measure theory given below are typically stated for $M = \R^N$ equipped with the Euclidean metric, they immediately generalize to any Riemannian manifold $(M,g)$ by embedding it isometrically into $\R^N$ for some $N$ using the Nash embedding theorem, and considering currents in $M$ as currents in $\R^N$.
\end{remark}

\begin{definition}[{cf. \cites[Chapter 6 Paragraphs 25.6, 26.4]{Simon1983}[Definitions 2.3, 3.4]{DeLellis2016}}]
  For $x\in M$ and $\alpha_x \in \Lambda^k T_x^*M$, set $\abs{\alpha_x} \coloneq \sup\set{ \abs{\alpha_x(\xi)} }$ with the supremum taken over all simple $k$--vectors $\xi \in \Lambda^k T_x M$ with $\abs{\xi} = 1$, with the norm induced by the Riemannian metric. 
  The \defined{comass} of $\alpha \in \Omega^k_c(M)$ is
  \begin{equation*}
    \Abs{\alpha} \coloneq \sup_{x\in M}\abs{\alpha_x}.
  \end{equation*}  
  The \defined{mass} of a $k$--current $T$ in $M$ is defined by
  \begin{equation*}
    \bM(T)
    \coloneq
    \sup\set{
      T(\alpha)
      :
      \alpha \in \Omega^k_c(M) \textnormal{ and } \Abs{\alpha} \leq 1
    }.
    \qedhere
  \end{equation*}
\end{definition}

\begin{definition}[{cf. \cite[Chapter 3 Section 11]{Simon1983}}]
  A subset $A \subset M$ is called  \defined{$k$--rectifiable} if there are subsets $A_0, A_1, A_2, \ldots \subset M$ such that $\sH^k(A_0) = 0$, each $A_i$ for $i\geq 1$ is a $C^1$--embedded $k$--dimensional submanifold, and
  \begin{equation*}
    A \subset \bigcup_{i=0}^\infty A_i.
  \end{equation*}
\end{definition}

If $A \subset M$ is $k$--rectifiable, then for $\sH^k$--almost every $x \in A$ there exists an \defined{approximate tangent space} to $A$ at $x$, which is a $k$--dimensional subspace of $T_x M$.
We denote it by $\pi(A,x)$.
For details, see \cites[Chapter 3 Section 11]{Simon1983}[Lemma 2.1.15]{DeLellis2016},

\begin{definition}[{cf. \cite[Definition 27.1]{Simon1983}}]
  A $k$--current $T$ in $M$ is \defined{integer rectifiable} if $\bM(T) < \infty$ and there exist:
  \begin{enumerate}
  \item
    a $k$--rectifiable subset $A \subset M$,
  \item
    an $\sH^k$--measurable function $m \co A \to \N\cup\set{0}$, and
  \item
    an $\sH^k$--measurable section $\overrightarrow{T}$ of  $\Lambda^k TM|_A$
  \end{enumerate}
  such that:
  \begin{enumerate}[resume]
  \item
    for $\sH^k$--almost all $x \in A$, the $k$--vector $\overrightarrow{T}(x) \in \Lambda^k T_x M$ is given by $\overrightarrow{T}(x) = e_1 \wedge \ldots \wedge e_k$ for any orthonormal frame of $\pi(A,x)$, and
  \item
    $T$ is given by
    \begin{equation*}
      T(\alpha) = \int_A  m(x) \Inner{\alpha(x), \overrightarrow{T}(x)} \, \rd\sH^k
      \qforq \alpha \in \Omega^k_c(M).
    \end{equation*}
    Here $\Inner{\cdot,\cdot}$ is the pairing between $k$--forms and $k$--vectors and the integral is taken with respect to the $k$--dimensional Hausdorff measure. 
  \end{enumerate}  
  We say that $T$ is \defined{integral} if both $T$ and $\partial T$ are integer rectifiable. 
  In particular, a closed integer rectifiable current is integral.
\end{definition}

\begin{definition}[{cf. \cites[Definition 5.5]{DeLellis2016}[Definition 0.1(b)]{DeLellis2017b}}]
  A $k$--\defined{semicalibration} on $M$ is a $k$--form $\sigma$ such that $\Abs{\sigma} \leq 1$.
  An integral $k$--current $T$ in $M$ is \defined{semicalibrated} by $\sigma$ if $\sigma_x(\overrightarrow{T}(x)) = 1$ for $\sH^k$--almost every $x \in \supp T$.
\end{definition}

\begin{remark}
  \label{Rem_Calibrations}
  A semicalibration $\sigma$ is called a \defined{calibration} if $\rd\sigma =0$. 
  This notion was introduced in a seminal article by \citet{Harvey1982},
  who observed that a calibrated current is volume-minimizing. 
\end{remark}

\begin{theorem}[Federer--Fleming Compactness Theorem]
  \label{Thm_FedererFleming}
  Let $(T_n)_{n \in \N}$ be a sequence of integral $k$--currents in $M$ and let $K \subset M$ be a compact subset.
  If 
  \begin{equation*}
    \sup_n\set{\bM(T_n) + \bM(\partial T_n)} < \infty \qandq \supp(T_n) \subset K
    \qforeveryq
    n \in \N,
  \end{equation*}
  then after passing to a subsequence $(T_n)_{n\in\N}$ converges weakly to an integral current $T$.
  Moreover, if each $T_n$ is closed, then so is $T$. 
  If each $T_n$ is semicalibrated by a semicalibration $\sigma$, then so is $T$. 
\end{theorem}

The proof of the first two statements can be found, for example, in \cite[Chapter 6 Section 32]{Simon1983}.
The statement about semicalibrations follows immediately: an integral current $T$ with $\supp T \subset K$ is semicalibrated by $\sigma$ if and only if $T(\chi \sigma) = \vol(T)$, for any $\chi \in C^\infty_c(M)$ with $\chi = 1$ on $K$, and this condition is preserved by the weak convergence.

The upcoming discussion requires the following notation and definition.

\begin{notation}
  \label{Not_KBranching}
  Given $k \in \N$,
  set
  \begin{equation*}
    \tilde D^k
    \coloneq
    \set{
      (z,w) \in \C^2 : z = w^k \text{ and } \abs{z} < 1
    }. 
  \end{equation*}
  $\tilde D^k\setminus\set{0}$ is an oriented smooth submanifold of $\C^2$ and the map $(z,w) \mapsto z$ is an orientation-preserving local diffeomorphism $\tilde D^k\setminus\set{0} \to D\setminus\set{0}$ where $D \coloneq \set{ z \in \C : \abs{z} < 1}$.
  We equip $\tilde D^k\setminus\set{0}$ with the pull-back of the flat metric on $D\setminus\set{0}$.%
  \footnote{While $\tilde D^k$ is homeomorphic to $D$ and thus has the structure of a smooth manifold, the pull-back metric on $\tilde D^k\setminus\set{0}$ does not extend through the origin unless $k=1$.}

  Let $k \in \N$ and $\alpha \in (0,1)$.
  Let $f \co \tilde D^k \to \R^{2n-2}$ be a continuous injective map  which is of class $C^{3,\alpha}$ on $\tilde D^k\setminus\set{0}$ and satisfies $f(0) = 0$ and $\abs{\rd f(z,w)} \lesssim \abs{z}^\alpha$.
  Define $\underline{f} \co \tilde D^k \to \R^{2n}$ by
  \begin{equation*}
    \underline{f}(z,w) \coloneq (z, f(z,w)).
    \qedhere
  \end{equation*}
\end{notation}

\begin{definition}[{cf. \cite[Definition 1.3]{DeLellis2017b}}]
  \label{Def_KBranching}
  Let $U \subset \R^{2n}$ be an open neighborhood of zero and let $\phi \co U \to M$ be a smooth chart.
  Given $f$ and $\phi$ as in \autoref{Not_KBranching}   the \defined{$k$--branching associated with $(f,\phi)$} is the integral $2$--current $G_{f,\phi}$ on $M$ given by
  \begin{gather*}
    G_{f,\phi} \co \Omega^2_c(M) \to \R \\
    G_{f,\phi}(\alpha)
    \coloneq
    \int_{\tilde D^k\setminus\set{0}} \underline{f}^*\phi^*\alpha
    \qforq
    \alpha \in \Omega^2_c(M).
    \qedhere
  \end{gather*}
\end{definition}

We are ready to state the crucial regularity theorem for $2$--dimensional semicalibrated currents used in this paper.

\begin{theorem}[{\citet{DeLellis2017b,DeLellis2017}}]
  \label{Lem_SingularityStructure}
  Let $\sigma$ be a $2$--semicalibration on a Riemannian manifold $(M,g)$ and let $T$ be an integral $2$--current in $M$ semicalibrated by $\sigma$.
  \begin{enumerate}
  \item
    \label{Lem_SingularityStructure_LocalPresentation}
    For every $x \in \supp(T) \setminus \supp(\partial T)$ there exist a neighborhood $U$ of $x$, a finite collection of maps $f_1,\ldots,f_I$ and charts $\phi_1,\ldots,\phi_I$ as in \autoref{Not_KBranching} with $\phi_i(0) = x$, and weights $m_1,\ldots,m_I \in \N$ such that
    \begin{equation}
      \label{Eq_LocalPresentation}
      T|_U = \sum_{i=1}^I m_i G_{f_i,\phi_i}.
    \end{equation}
  \item 
    \label{Lem_SingularityStructure_SingularSet}
    The set $\sing(T)$ is discrete.
  \end{enumerate}
\end{theorem}

\begin{proof}[Discussion of the proof]
  This result, for $M=\R^N$ with the Euclidean metric, is contained in \cite[Theorem 0.2 and Section 1]{DeLellis2017b} and \cite[Theorem 3.1, Section 3.2]{DeLellis2017}.
  The result for an arbitrary $M$ follows by the argument explained in \autoref{Rem_GMTRiemannianManifolds}. 
  Although only part \autoref{Lem_SingularityStructure_SingularSet} of the theorem is explicitly stated in the article \cite{DeLellis2017b}, its authors actually prove \autoref{Lem_SingularityStructure_LocalPresentation} which is a slightly stronger statement.
  Indeed, \autoref{Lem_SingularityStructure_LocalPresentation} implies \autoref{Lem_SingularityStructure_SingularSet} because $x$ is the only singular point of any current of the form \autoref{Eq_LocalPresentation}.
  For the reader's convenience, we outline how to reconstruct the proof of \autoref{Lem_SingularityStructure_LocalPresentation} from the discussion in \cite[Section 1; especially 1.4---1.5]{DeLellis2017b}.
  
  The first step in the proof, explained in \cite[Step 4 in Section 3.2]{DeLellis2017}, is to show that without loss of generality we may assume that  $T$ is \defined{irreducible} in the sense that it cannot be written in the form $T = T_1 + T_2$ for two integral currents $T_1$, $T_2$ with $\supp T_1 \cap \supp T_2 = \emptyset$. 
  For $\lambda > 0$ denote by $T_\lambda$ the rescaled current
  \begin{equation*}
    T_\lambda(\alpha) \coloneq T(\lambda^*\alpha) \qforq \alpha \in \Omega^2_c(\R^N),
  \end{equation*}
  where, by slight abuse of notation, $\lambda^*\alpha$ denotes the pull-back of $\alpha$ by the diffeomorphism $y \mapsto \lambda^{-1} y$. 
  Denote by $B_r \subset \R^N$ the ball of radius $r>0$ centered at zero.
  It is proved in \cite[Theorem 3.1, Step 4 in Section 3.2]{DeLellis2017} that in the situation at hand there exists an oriented $2$--dimensional linear subspace $\pi \subset \R^N$ and $m \in \N$ such that for every sequence $(\lambda_n)_{n\in\N}$ converging to zero and $r>0$, the sequence $(T_{\lambda_n}|_{B_r})_{n\in\N}$ converges weakly to the Dirac delta $\delta_{m\pi}|_{B_r}$; $\delta_{m\pi}$ is called the \defined{tangent cone}.
  Note that this tangent cone is, in particular, a multiple of a $1$--branching in the sense of \autoref{Def_KBranching} for $f=0$.

  The crucial step in the proof is \cite[Theorem 1.8]{DeLellis2017b} which asserts the following.
  Suppose that for some $r>0$ the restriction $T|_{B_r}$ is \defined{approximated by a $k$--branching}: the precise definition is rather technical and is stated in \cite[Assumption 1.7]{DeLellis2017b}. 
  If this is the case, then 
  \begin{enumerate}
  \item
    either $T|_{B_\rho} = \ell G_{f,\phi}$ for some  $\rho >0$, $\ell \in \N$, and a $k$--branching $G_{f,\phi}$,
  \item
    or there are $k' > k$ and $\lambda >0$  such that $T_\lambda |_{B_r}$ is approximated by a $k'$--branching.
  \end{enumerate} 
  Now the theorem can be proved by induction.
  It follows from \cite[Theorem 3.1]{DeLellis2017} that for sufficiently small $\lambda > 0$ and $r>0$, $T_\lambda|_{B_r}$ is approximated by a $1$--branching, namely the tangent cone. 
  Thus, either $T_\lambda|_{B_\rho}$ is a multiple of a $1$--branching for some $\rho>0$, or after further rescaling is approximated by a $k$--branching with $k > 1$.
  If the latter is true, we can apply the theorem again and the process continues, so that $T$ is after rescaling approximated by a $k$--branching for larger values of $k$. 
  The process must, however, terminate at some point, because the sequence of rescaling converges to the tangent cone $\delta_{m\pi}$ from which it follows that $k \leq m$ \cite[third paragraph of Section 2.1]{DeLellis2017b}. 
  Thus, after finitely many steps, we obtain that $\lambda^*T|_{B_\rho} = \ell G_{f,\phi}$ for some $\lambda > 0$, $\rho>0$, $\ell \in \N$ and a $k$--branching $G_{f,\phi}$.
  We conclude that $T|_{B_{\lambda\rho}} = \ell G_{f,\tilde\phi}$ for $\tilde\phi \coloneq \lambda \cdot \phi$. 
\end{proof}


\section{$J$--holomorphic cycles and geometric convergence}

In this section we introduce the notions of a $J$--holomorphic cycle and geometric convergence.
We then compare these with the notions of an integral currents and weak convergence.
This comparison, combined with the results discussed in \autoref{Sec_RegularityTheory}, implies \autoref{Lem_JHolomorphicCycleCompactness}.

Throughout,
let $(M,J,g)$ be an almost Hermitian manifold.
Denote by
\begin{equation*}
  \sigma \coloneq g(J\cdot,\cdot)
\end{equation*}
the corresponding Hermitian form.
It follows from Wirtinger's inequality that $\sigma$ is a semicalibration on $(M,g)$ and that a $2$--dimensional submanifold is semicalibrated by $\sigma$ if and only if it is $J$--holomorphic \cites[Section 5.4.19]{Federer1969}[Section II.3 Example 1]{Harvey1982}. 

\begin{remark}
  If $\sigma$ is closed, that is $\sigma$ is a symplectic form and $J$ is compatible with $\sigma$, then $\sigma$ is a calibration and \autoref{Rem_Calibrations} recovers the well-known fact that $J$--holomorphic curves in symplectic manifolds minimize volume.
\end{remark}

\begin{definition}
~
  \label{Def_JHolomorphicCurve}
  \begin{enumerate}
  \item
    A \defined{$J$--holomorphic curve} is a subset of $M$ which is the image of a simple $J$--holomorphic map to $M$. 
    \label{Def_JHolomorphicCycle}
    A \defined{$J$--holomorphic cycle} $C$ is a formal linear combination
    \begin{equation*}
      C = \sum_{i=1}^I m_i C_i
    \end{equation*}
    of  $J$--holomorphic curves $C_1,\ldots,C_I$ with coefficients $m_1,\ldots,m_I \in \N$.
  \item
    The \defined{homology class} of a $J$--holomorphic curve is the homology class of the corresponding simple map and the homology class of a $J$--holomorphic cycle $C$ is
    \begin{equation*}
      [C] \coloneq \sum_{i=1}^I m_i [C_i].
    \end{equation*}
  \item 
    We say that $C$ is \defined{smooth} if  $C_1, \ldots, C_I$ are embedded and pairwise disjoint. 
  \item 
    For a $J$--holomorphic cycle $C$ denote by $\delta_C \co \Omega^2_c(M) \to \R$ the associated Dirac delta $2$--current defined in \autoref{Ex_DiracDelta}.
    The support $\supp(C)$ and mass $\bM(C)$ of $C$ are defined as the support and mass of $\delta_C$ respectively.
    Explicitly, 
    \begin{equation*}
      \supp(C) \coloneq \supp(\delta_C) = \bigcup_{i=1}^I C_i
      \qandq
      \bM(C) \coloneq  \bM(\delta_c)  = \sum_{i=1}^I m_i \area(C_i).
      \qedhere
    \end{equation*}
  \end{enumerate}
\end{definition}

\begin{definition}[{\citet[Definition 3.1]{Taubes1998}}]
  \label{Def_GeometricConvergence}
  Let $M$ be a manifold and let $(J_n,g_n)_{n \in \N}$ be a sequence of almost Hermitian structures converging to an almost Hermitian structure $(J,g)$ in the $C^\infty$ topology.
  For every $n \in \N$ let $C_n$ be a $J_n$--holomorphic cycle.
  We say that $(C_n)_{n \in \N}$ \defined{converges geometrically} to a $J$--holomorphic cycle $C$ if:
  \begin{enumerate}
  \item
    \label{Def_GeometricConvergence_Current}
    $(\delta_{C_n})_{n\in \N}$  converges weakly to $\delta_C$
    and
  \item
    \label{Def_GeometricConvergence_Support}
    $(\supp(C_n))_{n\in \N}$ converges to $\supp(C)$ in the Hausdorff distance;
    that is:
    \begin{equation}
      \label{Eq_GeometricConvergence_Support}
      \lim_{n\to \infty} d_H(\supp(C),\supp(C_n)) \to 0.
    \end{equation}
  \end{enumerate}
  Recall that the Hausdorff distance between two closed sets $X$ and $Y$ is defined by
  \begin{equation*}
    d_H(X,Y)
    \coloneq
    \max\set*{
      \sup_{x \in X} d(x,Y),
      \sup_{y \in Y} d(y,X)
    },
  \end{equation*}
  with $d$ denoting the distance with respect to the Riemannian metric $g$. 
\end{definition}

The following results compare $J$--holomorphic cycles and geometric convergence with closed $\sigma$--semicalibrated currents and weak convergence.

\begin{lemma}
  \label{Prop_FromCalibratedCurrentsToPseudoHolomorphicMaps}  
  If ~$T$ is a closed integer rectifiable current with compact support which is semicalibrated by $\sigma$,
  then there exists a $J$--holomorphic cycle $C$ such that 
  \begin{equation*}
    T = \delta_C.
  \end{equation*}
\end{lemma}

For symplectic $4$--manifolds this result was proved by \citet[Proposition 6.1]{Taubes1996}.
Taubes' argument and the work of \citet{Riviere2009} establish the result for symplectic manifolds, that is: when $\sigma$ is a calibration.

\begin{proof}
  Let $\mathring{\Sigma} = \reg(T)$ be the set of regular points of $T$, so that $\mathring{\Sigma}$ is an oriented $C^1$ submanifold. 
  Since $T$ is semicalibrated by $\sigma$, 
  the tangent spaces to $\mathring{\Sigma}$ are $J$--invariant.
  It follows from elliptic regularity that $\mathring{\Sigma}$ is a smooth submanifold and has a canonical structure of a Riemann surface.
  By  \autoref{Lem_SingularityStructure}, the singular locus $\sing(T)$ is discrete and so finite since $\supp(T)$ is compact. 
  Moreover,  every $x \in \sing(T)$ has a neighborhood $U$ such that
  \begin{equation*}
    \mathring{\Sigma} \cap U \iso D\setminus\set{0} \sqcup \cdots \sqcup D\setminus\set{0} \qandq \sing(T) \cap U = \set{x}.
  \end{equation*}
  Here $D = \set{ z \in \C : |z| < 1}$. 
  Thus, $\mathring{\Sigma}$ can be compactified to a Riemann surface $\Sigma$ by adding finitely many points.
  The compact Riemann surface $\Sigma$ comes with a continuous map $u \co \Sigma \to M$.
  Its restriction to $\mathring{\Sigma}$ is a smooth $J$--holomorphic embedding. 
  Since the image of $u$ is the compact set $\supp(T)$ and the energy of $u$ is $\bM(T) < \infty$, it follows from the removable singularity theorem \cite[Theorem 4.2.1]{McDuff2012}  that $u$ is, in fact, smooth and $J$--holomorphic on all of $\Sigma$.
  Moreover, the above discussion shows that
  \begin{equation*}
    T(\alpha) = \int_\Sigma m\cdot u^*\alpha \quad\text{for all } \alpha \in \Omega^2_c(M)
  \end{equation*}
  for some locally constant function $m \co \Sigma \to \N$.  
  Denoting by $C_1, \ldots, C_I$ the images of the connected components of $\Sigma$ and by $m_1, \ldots, m_I \in \N$ the corresponding values of $m$ yields 
  \begin{equation*}
    T = \delta_C \quad\text{with } C \coloneq \sum_{i=1}^I m_i C_i.
    \qedhere
  \end{equation*}
\end{proof}

\begin{lemma}\label{Prop_ConvergenceOfCurrentsImpliesGeometricConvergence}
  In the situation of \autoref{Def_GeometricConvergence},
  if condition \autoref{Def_GeometricConvergence_Current} holds and there exists a compact subset containing $\supp(C_n)$ for every $n \in \N$,
  then condition \autoref{Def_GeometricConvergence_Support} holds as well.
\end{lemma}

This result, which is an immediate consequence of the monotonicity formula for $J$--holomorphic maps, is contained in the proof of \cite[Proposition 3.3]{Taubes1998} and also a well-known fact in geometric measure theory, so we omit the proof. 

\begin{proof}[Proof of \autoref{Lem_JHolomorphicCycleCompactness}]
  Since $\sup_n \bM(C_n) < \infty$ and $\supp(C_n) \subset K$ for every $n \in \N$, by \autoref{Thm_FedererFleming}
  there exists a subsequence which   converges weakly to a closed integer rectifiable current semicalibrated by $\sigma$.
  By \autoref{Prop_FromCalibratedCurrentsToPseudoHolomorphicMaps},
  this current is of the form $\delta_C$ for a $J$--holomorphic cycle $C$.
  By \autoref{Prop_ConvergenceOfCurrentsImpliesGeometricConvergence},
  the sequence of pseudo-holomorphic cycles $(C_n)$ geometrically converges to $C$.
\end{proof}


\section{Proof of \autoref{Thm_KRigidityImpliesFiniteness}}
\label{Sec_KRigidityImpliesFiniteness}

Suppose that $J$ is $k$--rigid and that
$A \in H_2(M)$ satisfies $\inner{c_1(M,J)}{A} = 0$ and its divisibility is at most $k$.
If the conclusion of the theorem fails,
then there are are infinitely many \emph{distinct} $J$--holomorphic curves $C_n \subset M$ representing $A$ and of energy at most $\Lambda$.
By \autoref{Lem_JHolomorphicCycleCompactness},
after passing to a subsequence, the sequence $(C_n)$ converges geometrically to a $J$--holomorphic cycle
\begin{equation*}
  C_\infty = \sum_{i=1}^I m_i C_\infty^i.
\end{equation*}

\begin{prop}
  \label{Prop_CinftyRegularity}
  $C_\infty$ is connected, smooth, and its multiplicity is at most the divisibility of $A$.
\end{prop}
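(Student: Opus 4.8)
The plan is to read off the structure of $C_\infty$ from the three conditions defining $k$--rigidity together with the hypothesis $\inner{c_1(M,J)}{A} = 0$, analysing each component in turn, and then to upgrade the connectedness of the approximating curves $C_n$ to that of the limit. I would begin by observing that geometric convergence preserves the homology class, so $[C_\infty] = A$: by condition \autoref{Def_GeometricConvergence_Current} we have $\delta_{C_n}(\alpha) \to \delta_{C_\infty}(\alpha)$ for every $\alpha \in \Omega_c^2(M)$, and for \emph{closed} $\alpha$ the left-hand side equals the fixed number $\inner{A}{[\alpha]}$ for all $n$; hence $\delta_{C_\infty}$ pairs with closed forms exactly as $A$ does, which is to say $[C_\infty] = A$. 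In particular $C_\infty$ is a nonzero cycle and $I \geq 1$.

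The heart of the matter is an index count. Write each curve $C_\infty^i$ as the image of a simple $J$--holomorphic map $u_i \co (\Sigma_i,j_i) \to (M,J)$. Since $\dim M = 6$ the term $(n-3)\chi(\Sigma_i)$ in \eqref{Eq_Index} vanishes, so $\ind(u_i) = 2\inner{c_1(M,J)}{(u_i)_*[\Sigma_i]}$. Condition \autoref{Def_KRigidAlmostComplexStructure_NonNegativeIndex} gives $\ind(u_i) \geq 0$, whence $\inner{c_1(M,J)}{(u_i)_*[\Sigma_i]} \geq 0$ for every $i$. On the other hand, summing against the positive multiplicities $m_i$ and using $[C_\infty] = A$ yields $\sum_{i=1}^I m_i \inner{c_1(M,J)}{(u_i)_*[\Sigma_i]} = \inner{c_1(M,J)}{A} = 0$. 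A sum of non-negative terms can vanish only if every term does, so each $u_i$ has index zero. Condition \autoref{Def_KRigidAlmostComplexStructure_Embedding} then applies: each $u_i$ is an embedding, and since the $C_\infty^i$ are by definition pairwise distinct curves the $u_i$ are not mutual reparametrizations and therefore have pairwise disjoint images. Thus $C_\infty$ is smooth in the sense of \autoref{Def_JHolomorphicCycle}.

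What remains, and where I expect the only real work, is connectedness. Each $C_n$ is connected, being the continuous image of a connected Riemann surface, yet a Hausdorff limit of connected sets may a priori fall apart; the smoothness just established is what prevents this. Indeed $\supp(C_\infty) = \bigcup_{i=1}^I C_\infty^i$ is a finite union of compact, pairwise disjoint embedded surfaces, so if $I \geq 2$ its components are separated by some distance $2\delta > 0$. Choosing disjoint open $\delta$--neighborhoods $U_1,\dots,U_I$, the first half of condition \autoref{Def_GeometricConvergence_Support}, namely $\sup_{x \in \supp(C_n)} d(x,\supp(C_\infty)) \to 0$, forces $\supp(C_n) \subset \bigcup_i U_i$ for $n \gg 1$, and connectedness then confines $\supp(C_n)$ to a single $U_{i_0}$; but this makes the second half, $\sup_{y \in \supp(C_\infty)} d(y,\supp(C_n)) \to 0$, fail for points of any component $C_\infty^j$ with $j \neq i_0$. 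This contradiction forces $I = 1$, so $\supp(C_\infty)$ is connected. Finally, with $I = 1$ the class relation reads $A = m_1 (u_1)_*[\Sigma_1]$, exhibiting $A$ as $m_1$ times an integral homology class; hence $m_1$ divides $A$ and the multiplicity $m_1$ of $C_\infty$ is at most the divisibility of $A$.
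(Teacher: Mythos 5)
Your proof is correct and follows essentially the same route as the paper: homology preservation under weak convergence, the index count forcing $\ind(u_i)=0$ via conditions \autoref{Def_KRigidAlmostComplexStructure_NonNegativeIndex} and \autoref{Def_KRigidAlmostComplexStructure_Embedding} of $k$--rigidity, and connectedness of the limit deduced from Hausdorff convergence plus connectedness of each $C_n$. Your $\delta$--neighborhood argument merely spells out in detail what the paper compresses into the one-line observation that a disconnected limit would force $C_n$ to be disconnected for $n \gg 1$.
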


\begin{proof}
  By \autoref{Def_GeometricConvergence}~\autoref{Def_GeometricConvergence_Current},
  $[C_\infty] = [A]$.
  Let $u_i \co \Sigma_i \to M$ be a simple $J$--holomorphic map whose image is $C_\infty^i$.
  The index formula \autoref{Eq_Index} yields
  \begin{equation*}
    \sum_{i=1}^I m_i \ind(u_i)
    = \sum_{i=1}^I 2 m_i \inner{c_1(M,J)}{[C_\infty^i]}
    = 2 \inner{c_1(M,J)}{[C_\infty]}
    = 0.
  \end{equation*}
  Since $J$ is $k$-rigid, by \autoref{Def_KRigidAlmostComplexStructure}~\autoref{Def_KRigidAlmostComplexStructure_NonNegativeIndex},
  there are no $J$--holomorphic curves of negative index.
  Thus, we have $\ind(u_i) \geq 0$ for every $i \in \set{1,\ldots,I}$ and the above computation shows that
  \begin{equation*}
    \ind(u_1) = \cdots = \ind(u_I) = 0.
  \end{equation*}
  Therefore,
  by \autoref{Def_KRigidAlmostComplexStructure}~\autoref{Def_KRigidAlmostComplexStructure_Embedding},
  the $J$--holomorphic curves $C_\infty^1,\ldots,C_\infty^I$ are embedded and pairwise disjoint.
  This proves that $C_\infty$ is smooth.

  To see that $C_\infty$ is connected, observe that if $C_\infty$ were disconnected, then \autoref{Def_GeometricConvergence}~\autoref{Def_GeometricConvergence_Support} would imply that $C_n$ is disconnected for $n \gg 1$.
  However, $C_n$ is a $J$--holomorphic curve and thus connected by definition.

  Since $A = m_1[C_\infty^1]$, it follows that $m_1$ is at most the divisibility of $A$.
\end{proof}

In the following,
we rescale the sequence $(C_n)$ and extract a further limit $\tilde C_\infty$.
The properties of $\tilde C_\infty$ will give a contradiction to $J$ being $k$--rigid.

Henceforth,
we denote by $C_\infty^1$ the $J$--holomorphic curve underlying the $J$--holomorphic cycle $C_\infty$.
Since the curves $C_n$ are all distinct, we can assume that they are all distinct from $C_\infty^1$.
We can also assume that every $C_n$ is contained in a sufficiently small tubular neighborhood of $C_\infty^1$.
By slight abuse of notation,
we regard $C_n$ as an $\exp^*\!J$--holomorphic curve in the normal bundle $NC_\infty^1$
and $C_\infty^1$ as the zero section in $NC_\infty^1$.

For every $\lambda > 0$ let $\sigma_\lambda$ be as in \autoref{Prop_RescalingComplexStructure}.
Choose $(\lambda_n)$ such that such that the sets
\begin{equation*}
  \tilde C_n \coloneq \sigma_{\lambda_n}^{-1}(C_n)
\end{equation*}
satisfy
\begin{equation}
  \label{Eq_DTildeCC=1}
  d_H(\tilde C_n,C_\infty^1)
  = 1/2.
\end{equation}
Set
\begin{equation*}
  J_n \coloneq \sigma_{\lambda_n}^*\exp^*J.
\end{equation*}
By construction,
the $\tilde C_n$ are $J_n$--holomorphic.
By \autoref{Prop_RescalingComplexStructure}, the sequence $(J_n)$ converges to the almost complex structure $J_u$ associated with the $J$--holomorphic map $u \co C_\infty^1 \hookrightarrow M$.
The sequence $(\tilde C_n)$ is contained in the compact disc bundle $\bar B_{1/2}(NC_\infty^1) \subset NC_\infty^1$.
By \autoref{Prop_PropertiesOfJD}~\autoref{Prop_PropertiesOfJD_Tamed},
$J_u$ is tamed by a symplectic form $\omega$ on $B_1(NC_\infty^1)$.
Consequently, for $n \gg 1$ the almost complex structure $J_n$ is tamed by $\omega$ as well.
Define a Riemannian metric  $g$ on $B_1(NC_\infty^1)$ by
\begin{equation*}
  g \coloneq \frac12(-\omega(J_u\cdot,\cdot)+\omega(\cdot,J_u\cdot)).
\end{equation*}
The analogously defined metrics $g_n$ are Hermitian with respect to $J_n$ and converge to $g$.
By the energy identity \cite[Lemma 2.2.1]{McDuff2012},
\begin{equation*}		
  \lim_{n\to \infty} \bM(\tilde C_n)
  =
  \lim_{n\to \infty} \delta_{\tilde C_n}(\omega)
  =
  \delta_{C_\infty^1}(\omega) < \infty.
\end{equation*}
Therefore, the mass of  $\tilde C_n$ with respect to $g_n$ (and thus also $g$) can be bounded independent of $n$.

By \autoref{Lem_JHolomorphicCycleCompactness},
a subsequence of $(\tilde C_n)_{n\in\N}$ geometrically converges to a $J$--holomorphic cycle
\begin{equation*}
  \tilde C_\infty = \sum_{i=1}^I \tilde m_i \tilde C_\infty^i.
\end{equation*}
Let $d_i \in \N$ be such that $[\tilde C^i_\infty] = d_i [C^1_\infty]$.
Condition \autoref{Eq_DTildeCC=1} guarantees that $\supp(\tilde C_\infty) \neq C_\infty^1$.
Therefore, without loss of generality, $\tilde C_\infty^1$ is not contained in the zero section.
Since
\begin{equation*}
   m_1[C_\infty^1] = A = [\tilde C_\infty]
  = \sum_{i=1}^I \tilde m_i d_i [C_\infty^1],
\end{equation*}
we have $d_1 \leq \tilde m_1 d_1 \leq m_1 \leq k$. 
\autoref{Prop_CurvesInNormalBundle} applies and the map $\varphi \co \tilde C^1_\infty \to C^1_\infty$ defined there has degree $d_1$. 
This contradicts $J$ being $k$--rigid.
\qed


 
\printreferences

\end{document}
